\newtheorem{thm}{Theorem}[section]
\newtheorem{dfn}[thm]{Definition}
\newtheorem{lmm}[thm]{Lemma}
\newtheorem{crl}[thm]{Corollary}
\newtheorem{rem}[thm]{Remark}
\numberwithin{equation}{section}
\newcommand{\Alg}{\mbox{{\rm Alg}}}
\newcommand{\Z}{\Bbb Z}
\newcommand{\R}{\Bbb R}
\newcommand{\RP}{\Bbb R\mbox{{\rm P}}}
\newcommand{\Map}{\mbox{{\rm Map}}}
\newcommand{\CP}{\Bbb C {\rm P}}
\newcommand{\dis}{\displaystyle}
\newcommand{\p}{\prime}
\newcommand{\E}{\tilde{E}}
\newcommand{\XD}{X^{\Delta}}
\newcommand{\SZ}{{\mathcal{X}}^{\Delta ,d}}
\newcommand{\SZd}{{\mathcal{X}}^{\Delta ,d+2}}
\newcommand{\I}{\mbox{{\rm (i)}}}
\newcommand{\II}{\mbox{{\rm (ii)}}}
\newcommand{\III}{\mbox{{\rm (iii)}}}
\journal{Topology and its Applications}
\begin{document}

\begin{frontmatter}




\title{\bf Simplicial resolutions and spaces of algebraic maps between real
projective spaces}



\author[label1]{A. Kozlowski\corref{cor1}}
\ead{akoz@mimuw.ed.pl}
\address[label1]{Institute of Applied Mathematics and Mechanics,University of Warsaw,
Banacha 2, 02-097 Warsaw,
Poland}
\author[label2]{K. Yamaguchi}
\ead{kohhei@im.uec.ac.jp}
\address[label2]{Department of Mathematics, University of Electro-Communications,
1-5-1 Chofugaoka, Chofu, Tokyo 182-8585, Japan}
\cortext[cor1]{Corresponding author}

\begin{abstract}
We show that the space $\tilde{A}_{d}(m,n)$  consisting
of all real projective classes of $(n+1)$-tuples of 
real coefficients homogeneous
 polynomials of degree $d$ in 
$(m+1)$ variables, without common real roots except zero, has the same homology as the space $ \Map(\RP^m,\Bbb  \RP^n)$ of continuous maps from the $m$-dimensional real projective space $\RP^m$ into
the $n$ real dimensional projective space $\RP^n$
up to dimension 
$(n-m)(d+1)-1$.   This  considerably improves the main result of \cite{AKY1}.
\end{abstract}

\begin{keyword}
Simplicial resolution \sep truncated simplicial resolution
\sep algebraic map \sep homotopy equivalence 
\sep Vassiliev spectral sequence.

\MSC[2000]
Primary 55P10\sep 55R80; Secondly 55P35, 55T99
\end{keyword}

\end{frontmatter}


\section{Introduction.}
Let $M$ and $N$ be manifolds with some additional structure, e.g holomorphic, symplectic, real algebraic etc. The relation between the topology of the space of continuous maps preserving this structure and that of the space of all continuous maps has long been an object of study in several areas of topology and geometry (e.g. \cite{BHM}, \cite{CJS} 
\cite{GKY2}, \cite{KY1}, \cite{KY3}, \cite{Mo2}, \cite{Se}). 
In  \cite{AKY1} we considered the case where the structure is that of a real algebraic variety.  The continuous maps that preserve this structure are the rational maps 
(called also regular or real algebraic maps).  
Using a method analogous to the one invented by Mostovoy \cite{Mo2} 
 to deal with the complex case, we obtained a closely related result, in which the space of rational maps is replaced by the space of tuples of homogeneous polynomials without non-trivial roots. 
 (In fact we believe this space to be equivalent to the space of all rational maps but have not been able to prove this completely). 
 Mostovoy's original article contained some mistakes that were corrected in \cite{Mo3}. 
 Besides correcting of  mistakes, \cite{Mo3} improves the  results of \cite{Mo2} 
 by means of a new variant of the  main spectral sequence. 
 This new spectral sequence is obtained from a \lq\lq truncated simplicial resolution\rq\rq\  of a discriminant, which replaces the \lq\lq degenerate simplicial resolution\rq\rq\  used in the original argument. 
 It is easy to see that the new spectral sequence can also be applied 
 to our case and makes it possible to obtain  better results. 
 The main purpose of this article is to carry out this idea.  
 However, we are able to obtain  better results than could be derived by a straight-forward replacement of the degenerate resolutions used in \cite{AKY1} by truncated ones. 
 This is achieved by  combining the information obtained 
 by using truncated resolutions (Theorem \ref{thm: sd} below) 
 with information obtained by using a non-degenerate resolution in the manner of \cite{Va}. 
We conjecture that our main result (Theorem \ref{thm: KY4-I} below) is optimal. 
\paragraph{\bf Notation}
Let $m$ and $n$ be positive integers such that
$1\leq m <n$.
We choose  ${\bf e}_k=[1:0:\cdots :0]\in \RP^k$ 
as the base point of $\RP^k$ ($k=m,n$), and
we denote by $\Map^*(\RP^m,\Bbb  \RP^n)$ the space
consisting of all based maps
$f:(\RP^m,{\bf e}_{m})\to (\Bbb \RP^n,{\bf e}_n)$.
We also denote
by $\Map_{\epsilon}^* (\RP^m,\RP^n)$
the corresponding path component
of $\Map^* (\RP^m,\RP^n)$
for each
$\epsilon \in \Z/2=\{0,1\}=
\pi_0(\Map^*(\RP^m,\Bbb  \RP^n))$
(\cite{CS}). 
Similarly, let $\Map (\RP^m,\RP^n)$ denote the space
of all free maps $f:\RP^m\to\RP^n$ and
$\Map_{\epsilon}(\RP^m,\RP^n)$ the corresponding path component
of $\Map (\RP^m,\RP^n)$.
\par
We shall use the symbols $z_i$ to denote variables of polynomials.
A  map $f:\RP^m\to \RP^n$ is called a
{\it algebraic map of the degree }$d$ if it can be represented as
a rational map of the form
$f=[f_0:\cdots :f_n]$ such that
 $f_0,\cdots ,f_n\in\Bbb R [z_0,\cdots ,z_m]$ are homogeneous polynomials of the same degree $d$
with no common {\it real } roots except ${\bf 0}_{m+1}=(0,\cdots ,0)\in\R^{m+1}$.
 We  denote by $\Alg_d(\RP^m,\RP^n)$ (resp. $\Alg_d^*(\RP^m,\RP^n)$) the space
 consisting of all (resp. based) algebraic maps $f:\RP^m\to \RP^n$
 of degree $d$.
 It is easy to see that there are inclusions 
 $\Alg_d(\RP^m,\RP^n)\subset \Map_{[d]_2}(\RP^m,\RP^n)$
 and
 $\Alg^*_d(\RP^m,\RP^n)\subset \Map^*_{[d]_2}(\RP^m,\RP^n)$,
 where $[d]_2\in\Z/2=\{0,1\}$ denotes the integer $d$ mod $2$.
\par  
For $m\geq 2$ and given an algebraic map 
$g\in \Alg_{d}^*(\RP^{m-1},\RP^n)$,
we denote by $\Alg_d(m,n;g)$ and $F(m,n;g)$  the subspaces given by
$$
\begin{cases}
\Alg_d(m,n;g) &=\ 
\{f\in \Alg_d^*(\RP^m,\RP^n):f\vert \RP^{m-1}=g\},
\\
F(m,n;g)  &=\ \{f\in \Map_{[d]_2}^*(\RP^m,\RP^n):
f\vert \RP^{m-1}=g\}.
\end{cases}
$$
Note that there is a homotopy equivalence
$F(m,n;g)\simeq  \Omega^m\RP^n \simeq \Omega^mS^n$ (\cite{Sasao}).

\paragraph{\bf Spaces of polynomials}
Now we turn to  spaces of collections of polynomials which represent  algebraic maps. These will all be subsets of real affine or real projective spaces.
\par\vspace{2mm}\par
Let ${\cal H}_{d,m}\subset \R [z_0,\ldots ,z_m]$
denote the subspace consisting of all
homogeneous polynomials of degree $d$.
Let $ A_{d}(m,n)(\Bbb R)\subset {\cal H}_{d,m}^{n+1}$ denote the space of all $(n+1)$-tuples 
$(f_0,\ldots ,f_n)\in \R[z_0,\ldots ,z_m]^{n+1}$
of homogeneous polynomials of degree $d$ without non-trivial common real roots
(but possibly with non-trivial common {\it non-real} roots). These are precisely the collections that represent algebraic maps. Since the algebraic map is invariant under multiplication of all polynomials by a non-zero scalar, it is convenient to define the projectivisation 
$\tilde{A}_d(m,n)= A_{d}(m,n)(\Bbb R) / \R^*,$
which is a subset of the real projective space
$({\cal H}_{d,m}^{n+1}\setminus\{{\bf 0}\})/\R^*$.
We have a projection
$
\Gamma_d:\tilde{A}_d(m,n)\to \Alg_d(\RP^m,\RP^n).
$
%
\par\vspace{2mm}\par
To describe base-point preserving maps we need to use the subspace 
$A_{d}(m,n)\subset A_d(m,n)(\R)$, which consists of $(n+1)$-tuples $(f_0,\ldots ,f_n)\in A_d(m,n)(\R)$ such that the coefficient at $z_0^d$ is $1$ in $f_0$ and $0$ in the remaining $f_k$. Note that every algebraic map which preserves the base-point has a representation by such a collection of polynomials, and
we get a natural projection map
$
\Psi_d:A_d(m,n)\to \Alg_d^*(\RP^m,\RP^n).
$
%
\par\vspace{2mm}\par
We define polynomial representations of restricted maps.
From now on,
let $m\geq 2$ and $g\in \Alg_d^*(\RP^{m-1},\RP^n)$ be a based algebraic map 
with some fixed polynomial representation $g=[g_0:\cdots :g_n]$ such that 
$(g_0,\ldots ,g_n)\in A_d(m-1,n)$. Set $B_k=\{g_k+z_mh:h\in {\cal H}_{d-1,m}\}$ $(k=0,1,\ldots ,n)$ and let
\begin{equation}\label{ad}
A_d^*:=B_0\times B_1\times \cdots \times B_n\subset  {\cal H}_{d,m}^{n+1}.
\end{equation}
This space consists of collections of polynomials which restrict to $(g_0,\ldots ,g_n)$ when $z_m=0$. Define  $A_d(m,n;g)\subset A_d^*$ to be the subspace of all collections with 
no non-trivial common {\it real} root:
\begin{equation}\label{Ad}
A_d(m,n;g)=
\left\{
\begin{array}{c}
\mbox{ }
\\
(f_0,\cdots ,f_n)\in A_d^*
\\
\mbox{ }
\end{array}
\right|
\left.
\begin{array}{l}
f_0,\cdots ,f_n
\mbox{ have no }
\\
\mbox{common real root}
\\
\mbox{except }
{\bf 0}_{m+1}
\in\R^{m+1}
\end{array}
\right\}.
\end{equation}
Clearly, such a collection determines a map in $\Alg_d(m,n;g)$ and we again obtain a projection
$\Psi_d^{\p}:A_d(m,n;g)\to \Alg_d(m,n;g).$
Let 
\begin{equation}
\begin{cases}
j_{d,\R}:\Alg_d(\RP^m,\RP^n)\stackrel{\subset}{\rightarrow} \Map_{[d]_2}(\RP^m,\RP^n)
\\
i_{d,\R}:\Alg_d^*(\RP^m,\RP^n)\stackrel{\subset}{\rightarrow} \Map_{[d]_2}^*(\RP^m,\RP^n)
\\
i_{d,\R}^{\p}:\Alg_d(m,n;g)\stackrel{\subset}{\rightarrow} F(m,n;g)\simeq \Omega^m\RP^n
\simeq \Omega^mS^n
\end{cases}
\end{equation}
denote the inclusions and let
\begin{equation}\label{jd}
\begin{cases}
j_d=j_{d,\R}\circ \Gamma_d:\tilde{A}_d(m,n)\to \Map_{[d]_2}^*(\RP^m,\RP^n)
\\
i_d=i_{d,\R}\circ \Psi_d:A_d(m,n)\to \Map_{[d]_2}^*(\RP^m,\RP^n)
\\
i_d^{\p}=i_{d,\R}^{\p}\circ \Psi_d^{\p}:A_d(m,n;g)\to F(m,n;g)\simeq \Omega^mS^n
\end{cases}
\end{equation}
be the natural maps.
\par\vspace{2mm}\par

The contents of this section can be summarized in the following 
diagram, for which we assume $g$ is a based algebraic map of degree $d$.
\[
\xymatrix{%
\Map_{[d]_2}(\RP^m,\RP^n)     &     \Map^*_{[d]_2}(\RP^m,\RP^n) \ar@{_{(}->}[l]  &   F(m,n;g) \ar@{_{(}->}[l] \\
\Alg_d(\RP^m,\RP^n) \ar@{^{(}->}[u]^{j_{d,\R}}     &     \Alg_d^*(\RP^m,\RP^n) \ar@{^{(}->}[u]^{i_{d,\R}} \ar@{_{(}->}[l]  &   \Alg_d(m,n;g)  \ar@{^{(}->}[u]^{i_{d,\R}^{\p}} \ar@{_{(}->}[l]\\
\tilde{A}_d(m,n) \ar@{->>}[u]^{\Gamma_d} \ar@/^4pc/[uu]^>>>>>>{j_d}   &     A_d(m,n) \ar@{->>}[u]^{\Psi_d} \ar@/^4pc/[uu]^>>>>>>{i_d} &   A_d(m,n;g) \ar@{->>}[u]^{\Psi_d^{\p}} \ar@/^4pc/[uu]^>>>>>>{i_d^{\p}}
}
\]
\paragraph{\bf The main results}
Let $\lfloor x\rfloor$ denote the integer part of a real number $x$, 
and define the integers
$D_{*}(d;m,n)$
and $D(d;m,n)$
 by
\begin{equation}\label{Dnumber}
D_{*}(d;m,n)  =
(n-m)\big(\lfloor \frac{d+1}{2}\rfloor  +1\big) -1,
\ \
D(d;m,n) =
(n-m)(d+1) -1.
\end{equation}
Now, we recall 
the following 2  results.

%
\begin{thm}[\cite{KY1}, \cite{Mo1}, \cite{Y5}]\label{thm: A1}
If $n\geq 2$ is an integer and $m=1$, then
the natural map
$i_{d}:A_d(1,n)\to  \Map^*_{[d]_2}(\RP^1,\RP^n)\simeq \Omega S^{n}$
is a homotopy equivalence
up to dimension 
$D(d;1,n)=(n-1)(d+1)-1.$
\qed
\end{thm}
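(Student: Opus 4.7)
The plan is to apply Vassiliev's method of non-degenerate simplicial resolutions, in the spirit of \cite{Mo2} and \cite{KY1}. First, realize $A_d(1,n)$ as the complement in the ambient affine space $V:=\R^{d(n+1)}$ of normalized $(n+1)$-tuples of degree $d$ binary forms, of the real discriminant $\Sigma_d$ consisting of tuples admitting a common real root in $\RP^1\setminus\{{\bf e}_1\}\cong\R$. One-point compactifying $V$ to $S^{d(n+1)}$ and applying Alexander duality identifies $\tilde H^{q}(A_d(1,n))$ with the Borel--Moore homology group $H^{BM}_{d(n+1)-q-1}(\Sigma_d)$, so it suffices to control $H^{BM}_{*}(\Sigma_d)$ in the appropriate high degrees.

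To compute $H^{BM}_{*}(\Sigma_d)$, construct the non-degenerate simplicial resolution $\pi:\mathcal{X}^{\Delta}\to\Sigma_d$: place over each $f\in\Sigma_d$ the simplex spanned by its finite common real root set $R(f)\subset\R$, where $|R(f)|\le d$. The map $\pi$ is a proper homotopy equivalence, and the filtration of $\mathcal{X}^{\Delta}$ by the number of roots used yields a spectral sequence $E^{1}_{p,q}\Rightarrow H^{BM}_{p+q}(\Sigma_d)$. The $p$-th stratum is a bundle over the (contractible) unordered configuration space $B_p(\R)$ with fiber the product of $\mathrm{int}(\Delta^{p-1})$ with an affine subspace of codimension $p(n+1)$, twisted by a sign local system coming from the $S_p$-action on configurations of real roots. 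A direct dimension count shows that column $p$ contributes, under Alexander duality, to cohomological degree $p(n-1)$.

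On the target side, $\Map^{*}_{[d]_2}(\RP^1,\RP^n)\simeq \Omega S^n$ carries the classical James filtration $J_p(S^{n-1})$ whose successive quotients are $(S^{n-1})^{\wedge p}=S^{p(n-1)}$; this gives an analogous spectral sequence with a single contribution per column, again in degree $p(n-1)$. The inclusion $i_d$ induces a morphism of spectral sequences, and the key step is to verify that $i_d$ induces an isomorphism on $E^{1}$ column by column for $p\le d$. The first column present on the loop-space side but absent on the $A_d(1,n)$ side is $p=d+1$, which contributes first at degree $(d+1)(n-1)$. Therefore $i_d$ is a homology equivalence through degree $D(d;1,n)=(n-1)(d+1)-1$, and standard arguments for nilpotent spaces (both sides are simply connected for $n\ge 3$, with a separate check for $n=2$) upgrade this to a homotopy equivalence in the same range.

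The main technical obstacle is the treatment of the sign local system on the columns of the Vassiliev spectral sequence: the $S_p$-action on configurations of real roots acts on the orientation of the affine fiber by $\mathrm{sgn}(\sigma)^{n+1}$, so the parity of $n$ enters the computation, and this is where the real case departs sharply from the complex case of \cite{Mo2}. Verifying that this twist still leaves a single nonzero $E^{1}$-entry in the expected degree, and that $i_d$ carries this generator to the James generator column by column, is the heart of the argument.
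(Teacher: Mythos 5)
First, note that the paper does not prove Theorem \ref{thm: A1} at all: it is quoted as a known result with references to \cite{KY1}, \cite{Mo1}, \cite{Y5}, so there is no internal proof to compare against. Your overall strategy (non-degenerate simplicial resolution of the real discriminant, Alexander duality, one $\Z$ per column in total degree $(n-1)p$ for $p\le d$, first discrepancy at $(n-1)(d+1)$) is the right one and is consistent with both the cited sources and the machinery of Sections 3--5 of this paper; the dimension count and the termination of the resolution at $p=d$ (a monic real binary form of degree $d$ has at most $d$ real roots, cf. Remark 3.5) are exactly what produce the bound $D(d;1,n)=(n-1)(d+1)-1$.

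The genuine gap is in the comparison step. The map $i_d$ does \emph{not} induce a morphism from the Alexander-dual Vassiliev spectral sequence of $\Sigma_d$ to ``the James filtration spectral sequence'' of $\Omega S^n$: the former is built from a filtration of a resolution of the discriminant inside a finite-dimensional affine space, the latter from a filtration of the target itself, and there is no map of filtered objects relating them. What your argument actually yields is an abstract isomorphism $H_k(A_d(1,n))\cong H_k(\Omega S^n)$ in the stated range, not the statement that $i_d$ induces it. To get the latter you must route the comparison through something like the paper's stabilization maps $s_d:A_d\to A_{d+2}$ (which do induce filtration-preserving maps of resolutions, hence of spectral sequences, isomorphic on columns $p\le d$), pass to $A_{d,\infty}$, and then invoke a separate theorem identifying $i_{d,\infty}:A_{d,\infty}(1,n)\to\Omega S^n$ as a homology equivalence (a Segal-type scanning result, which is the substance of \cite{Mo1}); alternatively, follow Vassiliev and resolve the discriminant in the space of all continuous maps. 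This missing ingredient is nontrivial and is precisely what the citations supply. A secondary misdiagnosis: the sign local system you call ``the heart of the argument'' is vacuous for $m=1$, since $C_p(\R)$ is contractible and every local system on it is trivial; the twist $(\pm\Z)^{\otimes(n-m)}$ only becomes a real issue for $m\ge 2$.
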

\begin{thm}[\cite{AKY1}]
\label{thm: AKY1-I}
If $2\leq m <n$ are integers and
$g\in \Alg_d^*(\RP^{m-1},\RP^n)$ is a fixed based algebraic map of degree $d$,
the natural map
$i_{d}^{\p}:A_d(m,n;g)\to F(m,n;g)\simeq \Omega^mS^n$
is a homotopy equivalence through dimension $D_{*}(d;m,n)$ if $m+2\leq n$
and a homology equivalence through dimension $D_{*}(d;m,n)$ if $m+1=n$.
\qed
\end{thm}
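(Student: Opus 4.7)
The plan is to follow Mostovoy's method from \cite{Mo2}, adapted to the real setting. Write $\Sigma_d = A_d^*\setminus A_d(m,n;g)$ for the discriminant inside the real affine space $A_d^*\cong\R^N$ with $N=(n+1)\dim_\R\H_{d-1,m}$; a tuple lies in $\Sigma_d$ iff it has a non-trivial common real root. Since $(g_0,\ldots,g_n)\in A_d(m-1,n)$ has no common real root on $\RP^{m-1}$, every such root must lie in the affine chart $\RP^m\setminus\RP^{m-1}\cong\R^m$. Alexander duality inside $A_d^*$ then translates the cohomology of $A_d(m,n;g)$ into the Borel-Moore homology of $\Sigma_d$, and our task reduces to computing $H^{BM}_*(\Sigma_d;\Z)$ through a given range.

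To do so, I would construct a degenerate simplicial resolution $\pi_\Delta\colon X_\Delta\to\Sigma_d$ whose fibre over $(f_0,\ldots,f_n)$ is the simplex spanned by the common real roots of the tuple, truncated so that no more than $\lfloor(d+1)/2\rfloor+1$ vertices appear. This yields an increasing filtration $X_1\subset X_2\subset\cdots$ on $X_\Delta$ whose $j$-th stratum fibres over the unordered configuration space $\mathrm{Conf}_j(\R^m)$, with fibre the open simplex $\mathrm{int}(\Delta^{j-1})$ crossed with the affine subspace of $A_d^*$ consisting of tuples vanishing at the chosen $j$ points. Provided these vanishing conditions are linearly independent, the resulting Vassiliev spectral sequence
\[
E^1_{p,q}\Longrightarrow H^{BM}_{p+q}(\Sigma_d;\Z)
\]
has $E^1$-columns equal to the twisted Borel-Moore homology of explicit real vector bundles over these configuration spaces.

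An entirely analogous filtration would be used on $F(m,n;g)\simeq\Omega^m S^n$ via the Snaith (or Cohen-Taylor-Vassiliev) stable splitting, whose associated spectral sequence has the same shape. The map $i_d^\p$ is compatible with both filtrations, and on $E^1$-columns the induced map is an isomorphism precisely in the range where \lq\lq vanishing at $j$ real points\rq\rq\ imposes $(n+1)j$ independent linear conditions on $A_d^*$, i.e.\ for $j\leq\lfloor(d+1)/2\rfloor+1$; this cut-off reflects the fact that in the real setting complex-conjugate pairs of roots consume coefficient space without contributing to the real-root discriminant. A diagonal dimension count then yields an $E^1$-isomorphism, and hence an isomorphism of abutments, in total degrees $\leq D_*(d;m,n)=(n-m)(\lfloor(d+1)/2\rfloor+1)-1$.

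The main obstacle will be proving that the truncation level $\lfloor(d+1)/2\rfloor+1$ is optimal: one must show both that the vanishing conditions are linearly independent up to this level (for which a Koszul-type argument on $\RP^m$ is standard but requires care over $\R$) and that the discarded portion of the resolution introduces no spurious classes below degree $D_*(d;m,n)$. To upgrade homology equivalence to homotopy equivalence when $m+2\leq n$, observe that $\Sigma_d$ has real codimension $\geq n-m+1\geq 3$ in the affine space $A_d^*$, so $A_d(m,n;g)$ is simply connected; combined with $\pi_1(\Omega^m S^n)\cong\pi_{m+1}(S^n)=0$ for $n\geq m+2$, Whitehead's theorem delivers the upgrade. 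When $m+1=n$ the target $\Omega^m S^{m+1}$ has $\pi_1\cong\Z$, so the simple-connectedness argument collapses and only a homology equivalence survives, which is precisely the weaker conclusion in that case.
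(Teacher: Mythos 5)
First, note that the paper does not actually prove Theorem \ref{thm: AKY1-I}: it is quoted from \cite{AKY1}, and the whole point of the present article is to improve its range from $D_{*}(d;m,n)$ to $D(d;m,n)$. Your outline follows the broad strategy of \cite{Mo2} and \cite{AKY1} (discriminant, Alexander duality, a degenerate simplicial resolution, comparison with the Vassiliev spectral sequence for $\Omega^mS^n$), but it goes wrong at the one point that actually produces the number $D_{*}(d;m,n)$. You assert that the cut-off $\lfloor(d+1)/2\rfloor+1$ is \lq\lq the range where vanishing at $j$ real points imposes $(n+1)j$ independent linear conditions on $A_d^*$\rq\rq. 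That is not where the bound comes from: for $m\geq 2$ the vanishing conditions at $r$ distinct real points are independent for all $r\leq d+1$ --- this is exactly Lemma \ref{lemma: vector bundle*} of the present paper, and it is precisely what makes the improvement to $D(d;m,n)$ possible. The true source of the weaker bound in \cite{AKY1} is the \emph{degeneracy} of the resolution used there: it is built from a Veronese-like embedding sending $x$ to the vector of monomials of degree $\leq d$ in $x$, and only about $\lfloor(d+1)/2\rfloor+1$ points of $\R^m$ are guaranteed to have affinely independent images, so beyond that range the strata of the resolution fail to be open disk bundles over configuration spaces and the $E^1$-term is no longer identifiable. Your proposal also conflates this degeneracy with \lq\lq truncation\rq\rq, which is the different, later device of \cite{Mo3} exploited in Section 4 of this paper; and the heuristic about complex-conjugate pairs of roots consuming coefficient space is not the operative mechanism.

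Second, the comparison with $\Omega^mS^n$ cannot be carried out by saying that $i_d^{\p}$ \lq\lq is compatible with both filtrations\rq\rq: there is no filtration-preserving map from the resolution of the discriminant to anything resolving the mapping space. The actual argument (in \cite{AKY1} and in Section 5 here) passes through the stabilization maps $s_d:A_d(m,n;g)\to A_{d+2}(m,n;g)$, forms the colimit $A_{d,\infty}(m,n;g)$, identifies its homology with $H_*(\Omega^mS^n)$ via Vassiliev's stable resolution together with the Snaith-type splitting (\ref{Vas}), and only then compares the two spectral sequences along the induced map. Without that intermediate object your $E^1$-comparison has no map to compare along. Your final paragraph on upgrading from homology to homotopy equivalence (codimension of the discriminant is $n-m+1\geq 3$ when $m+2\leq n$, so both sides are simply connected, and Whitehead applies) is correct and matches \cite[Fact 3.2]{AKY1} as used in Section 5.
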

\begin{thm}[\cite{AKY1}]
\label{thm: AKY1-II}
If $\ 2\leq m <n$ are integers, the natural maps
$$
\begin{cases}
j_d:\tilde{A}_d(m,n)\to \Map_{[d]_2}(\RP^m,\RP^n)
\\
i_d:A_d(m,n)\to \Map_{[d]_2}^*(\RP^m,\RP^n)
\end{cases}
$$
are homotopy equivalences through dimension $D_{*}(d;m,n)$ if
$m+2\leq n$ and  homology equivalences through dimension
$D_{*}(d;m,n)$ if $m+1=n$.
\qed
\end{thm}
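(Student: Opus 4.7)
The plan is to reduce the two statements for the unrestricted maps $i_d$ and $j_d$ to the already-established restricted case (Theorem \ref{thm: AKY1-I}) by a double application of comparison-of-fibrations: first an inductive step on $m$ using the restriction-to-hyperplane fibration to handle the based map case, then an evaluation fibration at the basepoint to pass from based to free maps.

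I would first do induction on $m$, with the base case $m=1$ supplied by Theorem \ref{thm: A1}. For the inductive step with $m \geq 2$, consider the restriction to the hyperplane $\{z_m=0\}$. On the polynomial side this gives a map $r_d : A_d(m,n) \to A_d(m-1,n)$ sending $(f_0,\ldots,f_n)$ to $(f_0|_{z_m=0},\ldots,f_n|_{z_m=0})$; its fibre over a point $(g_0,\ldots,g_n)$ representing an algebraic map $g \in \Alg_d^*(\RP^{m-1},\RP^n)$ is by construction the space $A_d(m,n;g)$ defined in (\ref{Ad}). On the topological side there is the genuine restriction fibration $\Map_{[d]_2}^*(\RP^m,\RP^n) \to \Map_{[d]_2}^*(\RP^{m-1},\RP^n)$ whose fibre is $F(m,n;g) \simeq \Omega^m S^n$. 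The natural maps $i_d$ assemble into a commutative square of fibre sequences, with fibre map $i_d^\p$.

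The next step is to show that $r_d$ is a quasi-fibration (or at least that its induced long exact sequence is valid through the relevant dimension), for which the standard tools are the Dold-Thom criterion or a direct scanning argument; one must check that the fibre dimension is locally constant outside a subset of sufficiently large codimension. Granting this, apply the inductive hypothesis to the base (equivalence through dimension $D_*(d;m-1,n)$) and Theorem \ref{thm: AKY1-I} to the fibre (equivalence through dimension $D_*(d;m,n)$), and invoke the five lemma on homotopy (respectively homology) long exact sequences to conclude that $i_d$ is an equivalence through dimension $D_*(d;m,n)$. For the free map statement, combine with the evaluation-at-${\bf e}_m$ fibration $\Map_{[d]_2}(\RP^m,\RP^n) \to \RP^n$ with fibre $\Map_{[d]_2}^*(\RP^m,\RP^n)$, and its algebraic analogue on $\tilde{A}_d(m,n)$; a further five-lemma comparison, using that the base $\RP^n$ is the same on both sides, promotes the based equivalence for $i_d$ to the free one for $j_d$.

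The main obstacle, I expect, is the quasi-fibration verification for $r_d$: the algebraic fibre can degenerate over the Zariski-closed subset where $(g_0,\ldots,g_n)$ acquires a common real root after restriction, so one must either stabilise, use a stratified version of Dold-Thom, or directly estimate connectivity of the strata to ensure the comparison survives through dimension $D_*(d;m,n)$. The dichotomy between homotopy equivalence for $m+2 \leq n$ and only homology equivalence for $m+1 = n$ enters here via the connectivity of the fibre $\Omega^m S^n$: in the codimension-one case $\pi_1$ of the relevant spaces is not killed by the fibration comparison and one loses the $\pi_1$-step, surviving only at the level of homology.
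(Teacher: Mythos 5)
First, note that this theorem is not proved in the present paper: it is quoted from \cite{AKY1} (and the corresponding improved statement, Corollary \ref{thm: KY4-II}, is likewise deferred with the remark ``using the same method as in \cite{AKY1} (cf. \cite{GM})''). So the only fair comparison is with the method of \cite{AKY1} that the paper points to. Your two-step reduction --- induction on $m$ via the restriction-to-hyperplane map to pass from the restricted spaces $A_d(m,n;g)$ to $A_d(m,n)$, then the evaluation fibration over $\RP^n$ to pass from based to free maps --- is indeed the standard strategy in this literature and is, in outline, the right one. The evaluation step is in fact cleaner than you make it: $ev:\tilde{A}_d(m,n)\to\RP^n$ is an honest fibre bundle with fibre $A_d(m,n)$, because $GL_{n+1}(\R)$ acts on $\tilde{A}_d(m,n)$ by post-composition compatibly with its transitive action on $\RP^n$; no quasi-fibration verification is needed there. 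What you do need to address there, and do not, is that for the case $n=m+1$ there is no ``five lemma'' for homology long exact sequences of fibrations: one must compare Leray--Serre spectral sequences with local coefficients, and the $\pi_1(\RP^n)=\Z/2$-action on the homology of the fibres has to be matched on the two sides.

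The genuine gap is the one you yourself flag and then leave open: the assertion that $r_d:A_d(m,n)\to A_d(m-1,n)$ is a quasi-fibration (or a homology fibration) through the relevant range. This is not a routine verification to be ``granted'': $A_d(m,n)$ is merely an open subset of the product $A_d(m-1,n)\times{\cal H}_{d-1,m}^{n+1}$ and $r_d$ is the restriction of the projection, whose fibres $A_d(m,n;g)$ vary with $g$; knowing from Theorem \ref{thm: AKY1-I} that every fibre has the homology of $\Omega^mS^n$ through dimension $D_{*}(d;m,n)$ does not imply that the projection is a homology fibration, so the long exact sequence (or spectral sequence) you want to run the comparison on is not yet available. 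This step is precisely the substantive content of the deduction in \cite{AKY1}, and listing possible tools (Dold--Thom, scanning, codimension estimates) without carrying one out leaves the proof incomplete. One smaller correction: the homotopy-versus-homology dichotomy does not come from the fibration comparison ``losing $\pi_1$'' but from the fact that the discriminant has codimension $n-m+1$ in the ambient affine space, so its complement is simply connected only when $n\geq m+2$ (this is the role of \cite{GM}); for $n=m+1$ the spaces involved need not be simply connected, so a homology equivalence cannot be promoted to a homotopy equivalence via Whitehead's theorem.
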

\begin{rem}\label{Remark: ho}
{\rm
A map $f:X\to Y$ is called {\it a homotopy} (resp. {\it a homology}) 
{\it equivalence through dimension} $D$ if
$f_*:\pi_k(X)\to \pi_k(Y)$ (resp. $f_*:H_k(X,\Z)\to H_k(Y,\Z)$) is an isomorphism for any
$k\leq D$.
Similarly, it is called {\it a homotopy} (resp. {\it a homology}) {\it equivalence up to dimension} $D$ if
$f_*:\pi_k(X)\to \pi_k(Y)$ (resp. $ f_*:H_k(X,\Z)\to H_k(Y,\Z)$) is an isomorphism for any $k< D$ and an epimorphism for $k=D$.
}
\end{rem}
\par\vspace{2mm}\par
The main purpose of this paper is to improve the above results by replacing $D_{*}(d;m,n)$ by $D_{}(d;m,n)$.  
More precisely, we will prove the following.

\begin{thm}
\label{thm: KY4-I}
If $2\leq m <n$ are integers and
$g\in \Alg_d^*(\RP^{m-1},\RP^n)$ is a fixed based algebraic map of degree $d$,
the natural map
$i_{d}^{\p}:A_d(m,n;g)\to F(m,n;g)\simeq \Omega^mS^n$
is a homotopy equivalence up to dimension $D_{}(d;m,n)$ if $m+2\leq n$
and a homology equivalence up to dimension $D_{}(d;m,n)$ if $m+1=n$.
%
\end{thm}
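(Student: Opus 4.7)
The plan is to realize $A_d(m,n;g)$ as the complement of the real discriminant $\Sigma_d$ in the affine space $A_d^* \cong \R^N$ (with $N = (n+1)\dim \mathcal{H}_{d-1,m}$), apply one-point compactification and Alexander duality
$$
\tilde{H}_k(A_d(m,n;g);\Z) \cong \tilde{H}^{N-1-k}(\Sigma_d^+;\Z),
$$
and compute $H^*(\Sigma_d^+)$ via a Vassiliev-type spectral sequence coming from a simplicial resolution of $\Sigma_d$. Comparison with an analogous scanning/May--Snaith spectral sequence for $\Omega^m S^n$ yields the natural map induced by $i_d^\p$, and the goal is to prove that this map is an isomorphism in the relevant range.

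First, following the truncated construction of \cite{Mo3}, I would build the truncated simplicial resolution $\mathcal{X}^{\Delta,d} \to \Sigma_d$ where the fiber over a tuple whose common real zero set has cardinality $r$ is truncated at simplicial degree $\min(r,d)-1$. Its spectral sequence has an $E^1$-term concentrated in columns $1 \le s \le d$, with the $s$-th column identified as twisted Borel--Moore homology of the unordered configuration space of $s$ points in $\R^m$, with coefficients in a local system whose fiber has real codimension $(n+1)s$. Invoking Theorem \ref{thm: sd}, this already yields a comparison with the scanning model of $\Omega^m S^n$ column-by-column in the truncated range. To push the conclusion up to $D(d;m,n) = (n-m)(d+1)-1$, I would then bring in a non-degenerate resolution $\mathcal{X}^\Delta \to \Sigma_d$ in the sense of \cite{Va} (where every fiber is a full simplex, with no truncation), together with the natural projection $\mathcal{X}^\Delta \to \mathcal{X}^{\Delta,d}$. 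The non-degenerate spectral sequence has extra columns $s > d$, but a dimension/codimension estimate shows that any class in column $s$ contributes to Alexander-dual cohomology in a total degree bounded below by a quantity exceeding $D(d;m,n)$, so these extra columns are invisible in the target range. Combining both resolutions then extends the comparison from $D_*$ to $D$.

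The main obstacle is the column-by-column identification of the truncated $E^1$ with the $E^1$ of the scanning spectral sequence for $\Omega^m S^n$, including careful tracking of the local system on the real configuration space (which, unlike in the complex analogue, is generally nontrivial) and verification that the map induced by $i_d^\p$ respects filtrations. The case $m+1=n$ is separately delicate because the target $\Omega^m S^{m+1}$ fails to be simply connected, so the spectral sequence argument must be run purely at the level of homology, with the bootstrap from homology to homotopy equivalence for $m+2 \le n$ handled via simple connectivity of $\Omega^m S^n$ and the Whitehead theorem. Finally, the passage from \lq\lq through dimension\rq\rq\ (as in Theorem \ref{thm: AKY1-I}) to \lq\lq up to dimension\rq\rq\ (as in the new statement) reflects the fact that the top column in the truncated/non-degenerate comparison supplies only a surjection, not an isomorphism, at the boundary degree $D(d;m,n)$, which is built into the definition of the range used here.
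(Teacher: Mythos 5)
Your proposal follows essentially the same route as the paper: Alexander duality against the discriminant $\Sigma_d^*$, the truncated simplicial resolution feeding into Theorem \ref{thm: sd} to get the comparison through dimension $D(d;m,n)-1$, and then the stable non-degenerate (Vassiliev-type) resolution of the stabilized discriminant — whose spectral sequence degenerates onto the Snaith-type splitting of $H_*(\Omega^mS^n)$ and whose columns $r\geq d+1$ vanish in total degree $\leq D(d;m,n)$ — to upgrade to an equivalence up to dimension $D(d;m,n)$, with the homotopy statement for $m+2\leq n$ obtained from simple connectivity. This is exactly the paper's factorization $i_d^{\p}=s_{d,\infty}\circ j_d^{\p}$ together with Theorems \ref{thm: AKY1-stable}, \ref{thm: sd} and \ref{thm: jd}, so the approach is the same.
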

Using the same method as in \cite{AKY1}
(cf. \cite{GM}), we also obtain the following:

\begin{crl}
\label{thm: KY4-II}
If $\ 2\leq m<n$ are integers, 
the natural maps
$$
\begin{cases}
j_d:\tilde{A}_d(m,n)\to \Map_{[d]_2}(\RP^m,\RP^n)
\\
i_d:A_d(m,n)\to \Map_{[d]_2}^*(\RP^m,\RP^n)
\end{cases}
$$
are homotopy equivalences 
up to 
dimension 
$D_{}(d;m,n)$ if
$m+2\leq n$ and  homology equivalences 
up to 
dimension
$D_{}(d;m,n)$ if $m+1=n$.
\qed
\end{crl}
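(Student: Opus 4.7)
The plan is to deduce Corollary~\ref{thm: KY4-II} from Theorem~\ref{thm: KY4-I} by induction on $m$, repeating the derivation of Theorem~\ref{thm: AKY1-II} from Theorem~\ref{thm: AKY1-I} given in \cite{AKY1} with $D_{*}(d;m,n)$ replaced by $D(d;m,n)$ throughout. The inductive comparison is purely formal and uses only the conclusion of Theorem~\ref{thm: KY4-I}, so the improved range carries through automatically.

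For $i_d$, the base case $m=1$ is Theorem~\ref{thm: A1}. For the inductive step, I would compare the restriction map $r_d:A_d(m,n)\to A_d(m-1,n)$ obtained by evaluating at $z_m=0$ (whose fiber over a tuple $g$ is precisely $A_d(m,n;g)$) with the Serre fibration $r:\Map^*_{[d]_2}(\RP^m,\RP^n)\to \Map^*_{[d]_2}(\RP^{m-1},\RP^n)$ given by restriction along $\RP^{m-1}\hookrightarrow\RP^m$ (whose fiber is $F(m,n;g)$). The natural maps $i_d^{\p}$, $i_d$ (at level $m$), and $i_d$ (at level $m-1$) assemble into a commutative ladder of (quasi-)fiber sequences. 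By Theorem~\ref{thm: KY4-I} the fiber map $i_d^{\p}$ is a $D(d;m,n)$-equivalence, and by the inductive hypothesis the base map is a $D(d;m-1,n)$-equivalence. Since $D(d;m-1,n)=D(d;m,n)+(d+1)>D(d;m,n)$, a standard five-lemma argument on the homotopy long exact sequences (when $m+2\le n$, so that all relevant fibers are simply connected in the range considered) or, when $m+1=n$, a comparison of Serre spectral sequences on homology forces $i_d$ itself to be a $D(d;m,n)$-equivalence, completing the induction.

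To pass from $i_d$ to $j_d:\tilde A_d(m,n)\to\Map_{[d]_2}(\RP^m,\RP^n)$, I would compare the evaluation fibration $\Map^*_{[d]_2}(\RP^m,\RP^n)\to\Map_{[d]_2}(\RP^m,\RP^n)\to\RP^n$ with its algebraic counterpart having fiber $A_d(m,n)$, total space $\tilde A_d(m,n)$, and base $\RP^n$; the algebraic projection is shown to be a quasi-fibration in \cite{AKY1} following \cite{GM}. Applying the same five-lemma/spectral-sequence comparison then transfers the $D(d;m,n)$-equivalence from $i_d$ to $j_d$. The main hidden technical point is that $r_d$ (and the algebraic evaluation onto $\RP^n$) is only a quasi-fibration, not a genuine fibration; this is handled in \cite{AKY1} by stratifying the base and showing that the restriction of $r_d$ over each stratum is a fibration, so no essentially new work is needed here beyond Theorem~\ref{thm: KY4-I}.
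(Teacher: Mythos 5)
Your proposal is correct and follows essentially the same route as the paper, which omits the argument and simply invokes ``the same method as in \cite{AKY1} (cf. \cite{GM})'' --- namely the inductive comparison of the restriction quasi-fibrations $A_d(m,n)\to A_d(m-1,n)$ (with fibre $A_d(m,n;g)$) and the evaluation fibrations, with Theorem~\ref{thm: KY4-I} supplying the improved range $D(d;m,n)$ on the fibres. Your observation that $D(d;m-1,n)=D(d;m,n)+(d+1)$ correctly justifies that the base of the induction does not constrain the range.
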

\begin{rem}
{\rm
With the help of Theorem \ref{thm: KY4-I}, it is possible to
generalize some results given in \cite{KY3}
concerning the space of algebraic maps
from $\RP^m$ to $\CP^n$. We do this in
a separate paper \cite{KY5}.
}
\end{rem}
\par\vspace{2mm}\par
This paper is organized as follows.
In section 2, we discuss various kinds of simplicial resolutions and describe a key new idea of Mostovoy \cite{Mo2}.
In section 3 we study the spectral sequences induced from the
non-degenerate simplicial resolutions of our spaces of discriminants. 
In section 4, we introduce the spectral sequence induced from the truncated simplicial resolution of our spaces of discriminants and prove the key
Theorem (Theorem \ref{thm: sd}).
Finally, in section 5, we prove the
 main result (Theorem \ref{thm: KY4-I}) by using the spectral sequences
induced from the non-degenerate simplicial resolutions of stabilized discriminants. 

\section{Simplicial resolutions.}\label{section 2}

In this section, we shall recall the definition of non-singular simplicial resolutions
and their truncated simplicial resolutions.
\begin{dfn}
{\rm
(i) For a finite set ${\bf x} =\{x_1,\cdots ,x_l\}\subset \R^N$,
let $\sigma (\textbf{x})$ denote the convex hull spanned by ${\bf x}.$
Let $h:X\to Y$ be a surjective map such that
$h^{-1}(y)$ is a finite set for any $y\in Y$, and let
$i:X\to \R^n$ be an embedding.
\par
Let  $\mathcal{X}^{\Delta}$  and $h^{\Delta}:{\mathcal{X}}^{\Delta}\to Y$ 
denote the space and the map
defined by
$$
\mathcal{X}^{\Delta}=
\big\{(y,u)\in Y\times \R^N:
u\in \sigma (i(h^{-1}(y)))
\big\}\subset Y\times \R^N,
\ h^{\Delta}(y,u)=y.
$$
The pair $(\mathcal{X}^{\Delta},h^{\Delta})$ is called
{\it a simplicial resolution of }$(h,i)$.
In particular, $(\mathcal{X}^{\Delta},h^{\Delta})$
is called {\it a non-degenerate simplicial resolution} if for each $y\in Y$
any $k$ points of $i(h^{-1}(y))$ span $(k-1)$-dimensional simplex of $\R^N$.
\par
(ii)
For each $r\geq 0$, let $\mathcal{X}^{\Delta}_r\subset \mathcal{X}^{\Delta}$ be the subspace
given by 
$$
\mathcal{X}_r^{\Delta}=\big\{(y,\omega)\in \mathcal{X}^{\Delta}:
\omega\in\sigma ({\bf v}),
{\bf v}=\{v_1,\cdots ,v_l\}\subset i(h^{-1}(y)),l\leq r\big\}.
$$
We make identification $X=\mathcal{X}^{\Delta}_1$ by identifying 
 $x\in X$ with 
$(h(x),i(x))\in \mathcal{X}^{\Delta}_1$,
and we note that  there is an increasing filtration
$$
\emptyset =
\mathcal{X}^{\Delta}_0\subset X=\mathcal{X}^{\Delta}_1\subset \mathcal{X}^{\Delta}_2\subset
\cdots \subset \mathcal{X}^{\Delta}_r\subset \mathcal{X}^{\Delta}_{r+1}\subset
\cdots \subset \bigcup_{r= 0}^{\infty}\mathcal{X}^{\Delta}_r=\mathcal{X}^{\Delta}.
$$
}
\end{dfn}

\begin{lmm}[\cite{Mo2}, \cite{Mo3}, \cite{Va}]\label{lemma: simp}
Let $h:X\to Y$ be a surjective map such that
$h^{-1}(y)$ is a finite set for any $y\in Y,$ and let
$i:X\to \R^N$ be an embedding.
\par
$\I$
If $X$ and $Y$ are semi-algebraic spaces and the
two maps $h$, $i$ are semi-algebraic maps, then
$h^{\Delta}:\mathcal{X}^{\Delta}\stackrel{\simeq}{\rightarrow}Y$
is a homotopy equivalence.
\par
$\II$
There is an embedding $j:X\to \R^M$ such that the associated simplicial resolution
$(\tilde{\mathcal{X}}^{\Delta},\tilde{h}^{\Delta})$
of $(h,j)$ is non-degenerate, and
the space $\tilde{\mathcal{X}}^{\Delta}$
is uniquely determined up to homeomorphism.
Moreover,
there is a filtration preserving homotopy equivalence
$q^{\Delta}:\tilde{\mathcal{X}}^{\Delta}\stackrel{\simeq}{\rightarrow}{\mathcal{X}}^{\Delta}$ such that $q^{\Delta}\vert X=\mbox{id}_X$.
\qed
\end{lmm}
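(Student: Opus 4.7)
The plan is to handle the two parts separately, using standard techniques from the Vassiliev--Mostovoy theory of simplicial resolutions.

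For part (i), the key observation is that $h^{\Delta}\colon \mathcal{X}^{\Delta}\to Y$ is a surjection whose fiber over each $y\in Y$ is precisely the single convex simplex $\{y\}\times\sigma(i(h^{-1}(y)))$, which is contractible since $h^{-1}(y)$ is finite. I would first verify that $h^{\Delta}$ is a semi-algebraic, proper map: properness follows from the fact that $h^{-1}(y)$ is finite for all $y$ together with the semi-algebraic hypotheses on $h$ and $i$, by a standard exhaustion argument. Once we have a proper semi-algebraic surjection with contractible fibers, the conclusion that $h^{\Delta}$ is a homotopy equivalence follows from the semi-algebraic version of the Vietoris--Begle / soft-map theorem; concretely, one chooses a semi-algebraic triangulation of $Y$ compatible with the stratification of $h$ (so that $h^{-1}$ has locally trivial combinatorial type over each open stratum), lifts it to $\mathcal{X}^{\Delta}$, and constructs a deformation retraction of $\mathcal{X}^{\Delta}$ onto $Y$ simplex by simplex using the contractibility of each fiber.

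For the existence of a non-degenerate embedding in part (ii), I would use the classical \emph{moment-curve trick}: pick any semi-algebraic embedding $\iota\colon X\hookrightarrow \R$ and define
\begin{equation*}
j\colon X\to \R^M,\qquad j(x)=(\iota(x),\iota(x)^2,\ldots,\iota(x)^M),
\end{equation*}
for $M$ large enough that any fiber $h^{-1}(y)$ has at most $M+1$ points (and apply an infinite-dimensional limit if the fibers are unbounded in size). The Vandermonde determinant then guarantees that any $k\leq M+1$ points of $j(h^{-1}(y))$ are affinely independent, so the simplicial resolution $(\tilde{\mathcal{X}}^{\Delta},\tilde{h}^{\Delta})$ of $(h,j)$ is non-degenerate. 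Uniqueness up to homeomorphism is then essentially a tautology: in the non-degenerate case the fiber $(\tilde{h}^{\Delta})^{-1}(y)$ is canonically the abstract simplex on the finite set $h^{-1}(y)$, so the total space depends only on the combinatorial data of the map $h$ and not on the choice of non-degenerate embedding.

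To produce the filtration-preserving equivalence $q^{\Delta}\colon \tilde{\mathcal{X}}^{\Delta}\to \mathcal{X}^{\Delta}$, I would define it fiberwise by the unique affine-linear surjection of the abstract simplex on $h^{-1}(y)$ onto $\sigma(i(h^{-1}(y)))$ that sends each vertex $x$ to $i(x)$. This is manifestly filtration preserving (vertices go to vertices, $r$-simplices to at most $r$-simplices), restricts to the identity on $X=\tilde{\mathcal{X}}_1^{\Delta}$, and has contractible fibers over each point of $\mathcal{X}^{\Delta}$ (each such fiber is again a convex simplex of a quotient type); another application of the semi-algebraic soft-map theorem from part (i) then upgrades $q^{\Delta}$ to a homotopy equivalence.

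The main obstacle, and the step I would spend most effort on, is the soft-map argument in part (i): making the passage from \emph{proper map with contractible fibers} to \emph{homotopy equivalence} entirely rigorous in the semi-algebraic (not necessarily compact) setting requires careful use of semi-algebraic triangulation and some attention to the behaviour at infinity. Once this Vietoris-type principle is in hand, the rest of part (ii)---in particular the homotopy equivalence statement for $q^{\Delta}$---follows by the same argument applied to a different proper map with contractible fibers, so no new ideas are needed beyond those already used in part (i).
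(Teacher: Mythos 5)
The paper gives no proof of this lemma: it is quoted from \cite{Mo2}, \cite{Mo3} and \cite{Va} with a \qed, so the only meaningful comparison is with the standard arguments in those sources. Your overall strategy matches them: for (i), reduce to the fact that $h^{\Delta}$ is a proper semi-algebraic surjection whose fibres are simplices, then invoke semi-algebraic (Hardt) triviality over a stratification to conclude it is a homotopy equivalence; for (ii), produce a sufficiently generic embedding, note that the fibre of a non-degenerate resolution is canonically the full simplex on $h^{-1}(y)$, and define $q^{\Delta}$ fibrewise by the affine surjection sending vertices $j(x)\mapsto i(x)$. All of that is the right skeleton.

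There is one concrete misstep: the moment-curve construction as you state it begins with ``any semi-algebraic embedding $\iota\colon X\hookrightarrow\R$,'' and such an embedding does not exist in general (already $X=\R^2$, or the spaces $Z_d^*$ actually used in this paper, do not embed in $\R$). The correct version of the trick, which is what Vassiliev and Mostovoy use and what Remark \ref{Remark: non-degenerate} of this paper alludes to, is to compose the given embedding $i\colon X\to\R^N$ with a Veronese-type map $\R^N\to\R^M$ given by all monomials up to degree $k$; the generalized Vandermonde argument then shows that any $k+1$ points of the image are affinely independent, which is all that is needed since only points within a single finite fibre $h^{-1}(y)$ must be independent (with the colimit over increasing $k$ handling unbounded fibre cardinalities, as you note). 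Two smaller points deserve more care than ``essentially a tautology'': uniqueness of $\tilde{\mathcal{X}}^{\Delta}$ up to homeomorphism is not purely combinatorial (the topology of the total space depends on how the simplices vary), and the standard proof compares two non-degenerate embeddings $j_1,j_2$ via the product embedding $(j_1,j_2)$, for which both projections are fibrewise affine homeomorphisms; and the upgrade of $q^{\Delta}$ from a proper map with convex (hence contractible) fibres to a homotopy equivalence again needs the triangulation/quasi-fibration machinery of part (i), as you correctly anticipate.
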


\begin{rem}\label{Remark: non-degenerate}
{\rm
Even for a  surjective map $h:X\to Y$ which is not finite to one,  
it is still possible to construct an associated non-degenerate simplicial resolution.
In fact, a non-degenerate simplicial resolution may be
constructed by choosing a sequence of embeddings
$\{\tilde{i}_r:X\to \R^{N_r}\}_{r\geq 1}$ satisfying the following two conditions
for each $r\geq 1$ (cf. \cite{Va}).
\begin{enumerate}
\item[(\ref{lemma: simp}$)_r$]
\begin{enumerate}
\item[(i)]
For any $y\in Y$,
any $t$ points of the set $\tilde{i}_k(h^{-1}(y))$ span $(t-1)$-dimensional affine subspace
of $\R^{N_r}$ if $t\leq 2r$.
\item[(ii)]
$N_r\leq N_{r+1}$ and if we identify $\R^{N_r}$ with a subspace of
$\R^{N_{r+1}}$, 
then $\tilde{i}_{r+1}=\hat{i}\circ \tilde{i}_r$,
where
$\hat{i}:\R^{N_r}\stackrel{\subset}{\rightarrow} \R^{N_{r+1}}$
denotes the inclusion.
\end{enumerate}
\end{enumerate}
Let
$$
\dis\mathcal{X}^{\Delta}_r=\big\{(y,\omega)\in Y\times \R^{N_r}:
\omega\in\sigma ({\bf u}),
{\bf u}=\{u_1,\cdots ,u_l\}\subset \tilde{i}_k(h^{-1}(y)),l\leq r\big\}.
$$
Then
by identifying naturally  ${\cal X}^{\Delta}_r$ with a subspace
of ${\cal X}_{r+1}^{\Delta}$,  we now define the non-degenerate simplicial
resolution ${\cal X}^{\Delta}$ of  $h$ as the union  
$\dis {\cal X}^{\Delta}=\bigcup_{r\geq 1} {\cal X}^{\Delta}_r$.
Non-degenerate simplicial resolutions have a long been used in algebraic geometry, and play  the central role in the work of Vassiliev  \cite{Va}.}
\end{rem}

\par\vspace{2mm}\par

In many practical cases the embedding  used to construct a simplicial resolution is given by an explicit map which carries geometric information about the corresponding filtration. Typically such an embedding gives rise to a simplicial resolution that is non-degenerate only in low dimensions. In some situations, such a degenerate resolution may provide more information about the homology of the resolved space than the non-degenerate one. This is why a degenerate resolution (defined by a Veronese-like embedding defined in the next section)  was used in \cite{Mo2} and \cite{AKY1}.  However, in \cite{Mo3} a modification of the non-degenerate resolution, called, the truncated (after a a certain term)
simplicial resolution  was used to obtain results that are (in most dimensions) better than the one derived by means of the degenerate resolution.

\begin{dfn}
{\rm
Let $h:X\to Y$ be a surjective semi-algebraic map between semi-algebraic spaces, and
$j:X\to \R^N$ be a semi-algebraic embedding. Consider the 
 associated non-degenerate  simplicial resolution
$({\cal X}^{\Delta},h^{\Delta}:{\cal X}^{\Delta}\to Y)$
\par
Let $k$ be a fixed positive integer and let
$h_k:{\cal X}^{\Delta}\to Y$ be the map defined
defined by the restriction
$h_k:=h^{\Delta}\vert {\cal X}^{\Delta}_k$.
The fibres of the map $h_k$ are $(k-1)$-skeleta of the fibres of $h^{\Delta}$ and, in general,  fail to be simplices over the subspace
$$
Y_k=\{y\in Y:h^{-1}(y)\mbox{ consists of more than $k$ points}\}.
$$
Let $Y(k)$ denote the closure of the subspace $Y_k$.
We modify the subspace ${\cal X}^{\Delta}_k$ so as to make the all
the fibres of $h_k$ contractible by adding to each fibre of $Y(k)$ a cone whose base
is this fibre.
We denote by $X^{\Delta}(k)$ this resulting space and by
$h^{\Delta}_k:X^{\Delta}(k)\to Y$ the natural extension of $h_k$.
\par
Following Mostovoy \cite{Mo3} we call the map $h^{\Delta}_k:X^{\Delta}(k)\to Y$
{\it the truncated (after the $k$-th term)  simplicial resolution} of $Y$ .
Note that there is a natural filtration
$$
\emptyset =X^{\Delta}_0\subset X^{\Delta}_1\subset
X^{\Delta}_2\subset
\cdots 
\subset X^{\Delta}_{k-1}\subset X^{\Delta}_k\subset X^{\Delta}_{k+1}=
X^{\Delta}_{k+2}
=\cdots =X^{\Delta}(k),
$$
where $X^{\Delta}_r={\cal X}^{\Delta}_r$ if $r\leq k$ and
$X^{\Delta}_r=X^{\Delta}(k)$ if $r>k$.
}
\end{dfn}
\begin{rem}\label{Remark 4}
{\rm
Let $\varphi:A\to B$ be a simplicial map between simplicial complexes and
let $C_{f}\varphi :C_{f}A\to B$ be its fibrewise cone construction.
Because $C_f\varphi$ can be constructed in the category of simplicial complexes,
it is a quasi-fibration and so it is a homotopy equivalence \cite{Hatch}.
It is also know that
for a semi-algebraic map $f:X\to Y$ between semi-algebraic spaces, there are
semi-algebraic triangulations on $X$ and $Y$ such that 
there is a semi-algebraic trivialization of the map $f$
\cite[Theorems 9.2.1 and 9.3.2]{BCR}.
}
\end{rem}
Then using these results, it is not difficult to prove the following.

\begin{lmm}[\cite{Mo3}]\label{Lemma: truncated}
Let $h:X\to Y$ be a surjective semi-algebraic map between semi-algebraic spaces, 
let $j:X\to \R^n$ be a semi-algebraic embedding with the associated simplicial resolution
$({\cal X}^{\Delta},h^{\Delta}:{\cal X}^{\Delta}\to Y)$, and let
$h^{\Delta}_k:X^{\Delta}(k)\to Y$
denote
the corresponding truncated $($after $k$-th term$)$ simplicial resolution of $Y$.
Then
$h^{\Delta}_k$ is a quasi-fibration with contractible fibres and hence a homotopy equivalence.
\qed
\end{lmm}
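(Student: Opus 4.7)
The plan is to verify two properties of $h^{\Delta}_k$: (i) every fibre is contractible, and (ii) the map is a quasi-fibration. Combined, these make $h^{\Delta}_k$ a weak homotopy equivalence, and since all spaces involved inherit CW structures from semi-algebraic triangulations, Whitehead's theorem upgrades this to a genuine homotopy equivalence.

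Step (i) is essentially definition-chasing. For $y\in Y\setminus Y(k)$ the preimage $h^{-1}(y)$ has at most $k$ points, so $(h^{\Delta})^{-1}(y)$ is a single simplex of dimension $\leq k-1$ that already lies in $\mathcal{X}^{\Delta}_k$ and is left untouched by the coning construction; for $y\in Y(k)$ the fibre of $h_k$ is the $(k-1)$-skeleton of a higher-dimensional simplex, and the truncated-resolution construction replaces it precisely by the cone on itself. In either case the fibre of $h^{\Delta}_k$ is contractible.

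Step (ii) is where the real work is. The strategy is to reduce to the simplicial setting flagged in Remark \ref{Remark 4}. Using the semi-algebraic triangulation theorems cited there (\cite{BCR}, Theorems 9.2.1 and 9.3.2), I would choose semi-algebraic triangulations of $X$, $Y$, and of the subspace $Y(k)\subset Y$ that are compatible with $h$, $j$, and with the stratification of $Y$ by the cardinality $|h^{-1}(y)|$; these lift canonically to triangulations of $\mathcal{X}^{\Delta}_k$ and of $X^{\Delta}(k)$ under which $h^{\Delta}_k$ becomes a simplicial map, obtained from $h_k$ by the fibrewise-cone construction over the subcomplex $Y(k)$. The first assertion of Remark \ref{Remark 4} then gives that this fibrewise cone is a quasi-fibration over $Y(k)$, while over $Y\setminus Y(k)$ the map $h^{\Delta}_k$ is semi-algebraically locally trivial with contractible simplicial fibres. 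Gluing the two pieces, for instance by Dold's union theorem applied to open thickenings of $Y(k)$ and of its complement, yields the quasi-fibration property globally.

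The principal technical obstacle is the compatibility of triangulations: one needs the semi-algebraic triangulation of $Y$ to be fine enough that both the stratification by $|h^{-1}(y)|$ and the subspace $Y(k)$ are subcomplexes, and one needs the induced triangulations of $\mathcal{X}^{\Delta}_k$ and $X^{\Delta}(k)$ to make the truncation followed by the fibrewise cone a simplicial map in the sense demanded by Remark \ref{Remark 4}. Once this simplicial setup is arranged, (i) combined with the quasi-fibration property implies that $(h^{\Delta}_k)_{*}$ is an isomorphism on all homotopy groups, and the Whitehead-theorem upgrade to a homotopy equivalence is automatic.
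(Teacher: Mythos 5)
Your overall route is exactly the one the paper intends: the paper gives no proof of its own for this lemma (it is quoted from \cite{Mo3}), but the preceding sentence and Remark \ref{Remark 4} make clear that the intended argument is precisely your reduction to the simplicial category via the semi-algebraic triangulation theorems of \cite{BCR}, followed by the quasi-fibration property of the fibrewise cone construction and a Whitehead-theorem upgrade. Your step (i) is fine (note only that for $y\in Y(k)\setminus Y_k$ the fibre of $h_k$ may already be a full simplex rather than a proper skeleton, but the added cone is still contractible, so nothing changes).

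The one step that would fail as literally stated is the claim that over $Y\setminus Y(k)$ the map $h^{\Delta}_k$ is locally trivial. On that open set the fibre over $y$ is the full simplex $\sigma(j(h^{-1}(y)))$, whose dimension is $|h^{-1}(y)|-1$, and the cardinality $|h^{-1}(y)|$ is not locally constant: points of $h^{-1}(y)$ can collide or disappear as $y$ varies within $Y\setminus Y(k)$, so the fibre dimension jumps along closed subsets and there is no local product structure. What is true, and what the standard Vassiliev-type argument provides, is that after triangulating compatibly with the stratification of $Y$ by $|h^{-1}(y)|$ the restriction of $h^{\Delta}_k$ over $Y\setminus Y(k)$ is a simplicial (or stratumwise trivial) map all of whose point-preimages are simplices, hence contractible; the Dold--Thom criteria (distinguished sets over each stratum plus the deformation criterion for the degeneration maps between fibres, which are maps of contractible spaces and hence homotopy equivalences) then yield the quasi-fibration property over $Y\setminus Y(k)$. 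With that substitution your gluing via the union theorem goes through, and the rest of the argument is correct.
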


The truncated simplicial resolution has two properties that play a key role in the new, improved argument. First of all,  as we saw above, the natural filtration stabilizes above the truncation degree. The second key property is the following:

\begin{lmm}[\cite{Mo3}, Lemma 2.1]\label{Lemma: dimension}
Under the same assumptions as Lemma \ref{Lemma: truncated},
$$
\dim (X_{k+1}^{\Delta}\setminus X_k^{\Delta}) = 
\dim ({\cal X}_k^{\Delta} \setminus {\cal X}_{k-1}^{\Delta})+1.
\qed
$$
\end{lmm}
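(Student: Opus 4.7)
The plan is to read off both dimensions directly from the fibrewise descriptions over the base $Y$, and then to observe that the cone operation raises the fibre dimension by exactly one.

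First, I would unwind the definition of the non-degenerate filtration to identify $\mathcal{X}^{\Delta}_k \setminus \mathcal{X}^{\Delta}_{k-1}$ as the locus of pairs $(y,\omega)$ with $\omega$ in the relative interior of a simplex $\sigma(\mathbf{u})$ spanned by \emph{exactly} $k$ points of $\tilde{i}_k(h^{-1}(y))$. Non-degeneracy guarantees these are genuine $(k-1)$-simplices, so, using the semi-algebraic trivialization of $h$ recalled in Remark~\ref{Remark 4}, the set is a locally trivial family of disjoint unions of open $(k-1)$-simplices over the semi-algebraic subset $Y_{\geq k}:=\{y\in Y:|h^{-1}(y)|\geq k\}$. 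This yields $\dim(\mathcal{X}^{\Delta}_k\setminus \mathcal{X}^{\Delta}_{k-1})=\dim Y_{\geq k}+(k-1)$.

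Next, by the definition of the truncation, $X^{\Delta}_{k+1}\setminus X^{\Delta}_k=X^{\Delta}(k)\setminus \mathcal{X}^{\Delta}_k$ is, fibrewise over $y\in Y(k)$, the open cone $C(h_k^{-1}(y))\setminus h_k^{-1}(y)$ that was glued on. For $y\in Y_k$, where $|h^{-1}(y)|>k$, the fibre $h_k^{-1}(y)$ is the $(k-1)$-skeleton of the full simplex $\sigma(\tilde{i}_k(h^{-1}(y)))$ and is therefore itself $(k-1)$-dimensional; the open cone on it has dimension $k$. Invoking semi-algebraic triviality again, the total dimension is $\dim Y(k)+k$. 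Thus the fibre dimension of the truncated piece exceeds that of the non-degenerate piece by exactly one.

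Combining the two computations, the asserted identity reduces to the equality of base dimensions $\dim Y(k)=\dim Y_{\geq k}$. This is the step I expect to be the main obstacle: it uses the semi-algebraic structure of $h$ in an essential way. The inclusion $Y_k\subset Y_{\geq k}$ gives $\dim Y(k)\leq \dim Y_{\geq k}$ automatically; for the reverse inequality one stratifies $Y$ by fibre cardinality via Hardt triviality (\cite{BCR}, cf.\ Remark~\ref{Remark 4}) and argues that a top-dimensional component of the stratum $\{y:|h^{-1}(y)|=k\}$ can be perturbed within $Y_{\geq k}$ so as to acquire an extra preimage, placing this component inside $\overline{Y_k}=Y(k)$. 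Once this base-dimension equality is in hand, the lemma follows at once from the fibre-dimension computations in the first two paragraphs.
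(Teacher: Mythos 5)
The paper gives no proof of this lemma: it is quoted from \cite[Lemma 2.1]{Mo3} and stamped with a \qed, so there is nothing internal to compare against. Your first two paragraphs are correct and are essentially Mostovoy's argument: over each Hardt stratum the piece $\mathcal{X}^{\Delta}_k\setminus\mathcal{X}^{\Delta}_{k-1}$ fibres in open $(k-1)$-simplices over $Y_{\geq k}=\{y:|h^{-1}(y)|\geq k\}$, while $X^{\Delta}_{k+1}\setminus X^{\Delta}_k$ fibres in open cones of dimension at most $k$ over $Y(k)=\overline{Y_k}$ with $Y_k=\{y:|h^{-1}(y)|>k\}$. Since $\dim Y(k)=\dim Y_k\leq \dim Y_{\geq k}$, this already yields
$\dim(X^{\Delta}_{k+1}\setminus X^{\Delta}_k)\leq \dim(\mathcal{X}^{\Delta}_k\setminus\mathcal{X}^{\Delta}_{k-1})+1$,
which is the form in which the lemma appears in \cite{Mo3} and the only form used later (in the proof of Lemma \ref{lemma: range}(iii) one needs only an upper bound on $\dim(X^{\Delta}_{d+2}\setminus X^{\Delta}_{d+1})$ to get the vanishing of $E^1_{d+2,s}$).

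The gap is exactly where you suspected it, and it cannot be closed: the base-dimension equality $\dim Y(k)=\dim Y_{\geq k}$, i.e.\ $\dim\{|h^{-1}(y)|>k\}=\dim\{|h^{-1}(y)|\geq k\}$, is false under the stated hypotheses, and so is the literal lemma with ``$=$''. Take $Y=\R$, $X=\R\sqcup\{p\}$, $h$ the identity on the first summand and $h(p)=0$, and $k=1$. Then $\mathcal{X}^{\Delta}_1\setminus\mathcal{X}^{\Delta}_0=X$ has dimension $1$, while $Y_1=\{0\}$, so $X^{\Delta}_2\setminus X^{\Delta}_1$ is an open cone on two points minus its base, of dimension $1\neq 1+1$. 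Your proposed repair --- perturbing a top-dimensional component of the stratum $\{|h^{-1}(y)|=k\}$ so that it ``acquires an extra preimage'' --- is precisely what Hardt triviality forbids: on the interior of such a stratum the map is a trivial finite cover, and a point there need not lie in $\overline{Y_k}$ (in the example, no $y\neq 0$ does). So you should prove and use the inequality version only; with your two fibre computations that proof is complete, and it suffices for every application in the paper.
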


\section{The spectral sequences.}


Because the following notations were already given in
\cite[section 3]{AKY1}, we only explain the important ones.
\begin{dfn}\label{Def: 3.1}
{\rm
Fix a based algebraic map $g\in\Alg_d^*(\RP^{m-1},\RP^n)$ of degree $d$ together with a representation $(g_0,\ldots,g_n)\in A_d(m-1,n)$. Note that 
$A_d(m,n;g)$ is an open subspace of the affine space $A_d^*$
as defined in (\ref{ad}). 
\par\vspace{2mm}\par
(i)
 Let $N_d^*=\dim A_d^*=(n+1)\binom{m+d-1}{m}$, and 
define $\Sigma_d^*\subset A_d^*$ as the \emph{discriminant} of $A_d(m,n;g)$ in $A_d^*$ by
$$\Sigma_d^*=A_d^*\setminus A_d(m,n;g).$$
In other words, $\Sigma_d^*$ consists of the $(n+1)$-tuples of polynomials in $A_d^*$ which have at least one nontrivial common real root.
\par
(ii)
Let  $Z_d^{*}\subset \Sigma_d^{*}\times \R^m$
denote 
{\it the tautological normalization} of 
 $\Sigma_d^*$
consisting of all pairs 
$({\bf f},{\bf x})=((f_0,\ldots ,f_n),
(x_0,\ldots ,x_{m-1}))\in \Sigma_d^{*}\times\R^m$
such that the polynomials $f_0,\ldots ,f_n$ have a non-trivial common real root
$({\bf x},1)=(x_0,\ldots ,x_{m-1},1)$.
Projection on the first factor  gives a surjective map
$\pi_d^{\p} :Z_d^{*}\to\Sigma_d^{*}.$

}
\end{dfn}

Our goal in this section is to construct, by means of the
non-degenerate simplicial resolution  of the discriminant, a spectral sequence converging to the homology of the space
$A_d(m,n;g)$.
\begin{dfn}\label{non-degenerate simp.}
{\rm
Let 
$(\SZ,{\pi_d}^{\Delta}:\SZ\to\Sigma_d^*)$ 
denote the non-degenerate simplicial resolution of the surjective map
$\pi_d^{\p}:Z_d^*\to \Sigma_d$ 
with the natural increasing filtration
$$
\SZ_0=\emptyset
\subset \SZ_1\subset 
\SZ_2\subset \cdots
\subset 
\SZ=\bigcup_{k= 0}^{\infty}\SZ_k.
$$
}
\end{dfn}


By Lemma \ref{lemma: simp} the map
$^{\p}{\pi}_d^{\Delta}:
\SZ\stackrel{\simeq}{\rightarrow}\Sigma_d^*$
is a homotopy equivalence, and it
extends to  a homotopy equivalence
$^{\p}{\pi_{d+}^{\Delta}}:\SZ_+\stackrel{\simeq}{\rightarrow}{\Sigma_{d+}^*},$
where $X_+$ denotes the one-point compactification of a
locally compact space $X$.
\par
Since
${\cal X}^{\Delta ,d}_+/{\SZ_{r-1}}_+
\cong (\SZ_r\setminus \SZ_{r-1})_+$,
we have a spectral sequence 
$$\big\{E_t^{r,s}(d),
d_t:E_t^{r,s}(d)\to E_t^{r+t,s+1-t}(d)
\big\}
\Rightarrow
H^{r+s}_c(\Sigma_d^*,\Z),
$$
where
$E_1^{r,s}(d):=H^{r+s}_c(\SZ_r\setminus\SZ_{r-1},\Z)$ and
$H_c^k(X,\Z)$ denotes the cohomology group with compact supports given by 
$
H_c^k(X,\Z):= H^k(X_+,\Z).
$
\par
By the Alexander duality  there is a natural
isomorphism
\begin{equation}\label{Al}
H_k(A_d(m,n;g),\Z)\cong
H_c^{N_d^*-k-1}(\Sigma_d^*,\Z)
\quad
\mbox{for }1\leq k\leq N_d^*-2.
\end{equation}
Using (\ref{Al}) and
reindexing we obtain a
spectral sequence
\begin{eqnarray}\label{SS}
&&\big\{\E^t_{r,s}(d), \tilde{d}^t:\E^t_{r,s}(d)\to \E^t_{r+t,s+t-1}(d)\big\}
\Rightarrow H_{s-r}(A_d(m,n;g),\Z)
\end{eqnarray}
if $s-r\leq N_d^*-2$,
where
$\E^1_{r,s}(d)=
H^{N_d^*+r-s-1}_c(\SZ_r\setminus\SZ_{r-1},\Z).$
\par\vspace{2mm}\par
For a connected space $X$, 
let $F(X,r)$ denote the configuration space  of distinct $r$ points in $X$.
The symmetric group $S_r$ of $r$ letters acts on $F(X,r)$ freely by permuting 
coordinates, and let $C_r(X)$ be the configuration space of unordered 
$r$-distinct  points in $X$ given by 
$C_r(X)=F(X,r)/S_r$.

\begin{lmm}\label{lemma: vector bundle*}
If $1\leq r\leq d+1$ and $m\geq 2$,
$\SZ_r\setminus\SZ_{r-1}$
is homeomorphic to the total space of a real open disk
bundle $\xi_{d,r}$ over $C_r(\R^m)$ with rank 
$l_{d,r}^*:=N_d^*-nr-1$.
\end{lmm}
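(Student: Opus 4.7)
The plan is to realize $\SZ_r \setminus \SZ_{r-1}$ explicitly as an open disk bundle by constructing a projection to $C_r(\R^m)$ and analyzing its fibers. Because the simplicial resolution is non-degenerate (Lemma \ref{lemma: simp}\II), each point $(\mathbf{f}, \omega) \in \SZ_r \setminus \SZ_{r-1}$ has $\omega$ lying in the relative interior of a unique open $(r-1)$-simplex whose vertices are the images, under the embedding used to build the resolution, of exactly $r$ distinct preimages of $\mathbf{f}$ under $\pi_d^{\p}$. By Definition \ref{Def: 3.1}\II\ these preimages are labelled by $r$ distinct points $\mathbf{x}_1, \ldots, \mathbf{x}_r \in \R^m$ with each $(\mathbf{x}_j, 1)$ a common real root of $\mathbf{f}$, and sending $(\mathbf{f}, \omega) \mapsto \{\mathbf{x}_1, \ldots, \mathbf{x}_r\}$ defines the required continuous projection $\pi_r \colon \SZ_r \setminus \SZ_{r-1} \to C_r(\R^m)$.

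Over a configuration $\{\mathbf{x}_1, \ldots, \mathbf{x}_r\}$ the fiber of $\pi_r$ decomposes canonically as the product $V(\mathbf{x}_1, \ldots, \mathbf{x}_r) \times \mathrm{int}(\Delta^{r-1})$, where $\mathrm{int}(\Delta^{r-1})$ is the open standard $(r-1)$-simplex (recording the barycentric position of $\omega$ within the spanned simplex) and $V(\mathbf{x}_1, \ldots, \mathbf{x}_r) \subset A_d^*$ is the affine subspace of tuples $(f_0, \ldots, f_n) \in A_d^*$ satisfying $f_k(\mathbf{x}_j, 1) = 0$ for all $k = 0, \ldots, n$ and $j = 1, \ldots, r$. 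Using the parametrization $f_k = g_k + z_m h_k$ from (\ref{ad}), these become the inhomogeneous linear constraints $h_k(\mathbf{x}_j, 1) = -g_k(\mathbf{x}_j)$ on $h_k \in {\cal H}_{d-1, m}$, giving a system of $r$ scalar equations for each of the $n+1$ factors separately.

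The crucial input is a codimension computation: for $1 \leq r \leq d+1$ and $m \geq 2$, and for every choice of $r$ distinct points $\mathbf{x}_1, \ldots, \mathbf{x}_r \in \R^m$, the $r$ evaluation functionals $h \mapsto h(\mathbf{x}_j, 1)$ are linearly independent on ${\cal H}_{d-1, m}$. Via the dehomogenization isomorphism ${\cal H}_{d-1, m} \cong \R[z_0, \ldots, z_{m-1}]_{\leq d-1}$ (setting $z_m = 1$) this becomes a classical polynomial interpolation statement; for $r \leq d$ it follows from a Lagrange-type construction using products of $r-1$ affine forms in $m \geq 2$ variables (one chooses, for each $i$, affine forms vanishing on $\mathbf{x}_j$, $j \neq i$, but not on $\mathbf{x}_i$), while the boundary case $r = d+1$ requires a slightly finer argument that again exploits $m \geq 2$ essentially. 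Once independence is granted, $V(\mathbf{x}_1, \ldots, \mathbf{x}_r)$ has codimension $(n+1)r$ in $A_d^*$, so $\dim V(\mathbf{x}_1, \ldots, \mathbf{x}_r) = N_d^* - (n+1)r$ and the total fiber has dimension $\bigl(N_d^* - (n+1)r\bigr) + (r-1) = N_d^* - nr - 1 = l_{d,r}^*$.

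Finally, I would upgrade the pointwise description to an honest bundle. The affine subspaces $V(\mathbf{x}_1, \ldots, \mathbf{x}_r)$ are the level sets of a continuous surjective linear bundle map $A_d^* \times F(\R^m, r) \to \R^{(n+1)r}$ which is $S_r$-equivariant, so they descend to a real affine $\bigl(N_d^* - (n+1)r\bigr)$-plane bundle over $C_r(\R^m) = F(\R^m, r)/S_r$; the fibrewise product with the canonically trivial $\mathrm{int}(\Delta^{r-1})$-factor then yields the desired open disk bundle $\xi_{d, r}$ of rank $l_{d, r}^*$. I expect the main obstacle to be the interpolation step for the boundary case $r = d+1$: this is exactly where the hypothesis $m \geq 2$ is used essentially, and it is the source of the improvement in the final dimension bound $D(d; m, n)$ over the weaker bound $D_*(d; m, n)$ of \cite{AKY1}.
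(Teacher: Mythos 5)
Your proposal follows the same route as the paper's own (very terse) proof: project $\SZ_r\setminus\SZ_{r-1}$ to $C_r(\R^m)$ by recording the $r$ roots whose images span the face containing $\omega$, identify the fibre with the product of an affine solution space and $\mathrm{int}(\Delta^{r-1})$, and count dimensions, $\bigl(N_d^*-(n+1)r\bigr)+(r-1)=l_{d,r}^*$. The paper compresses all of this into a reference to \cite[Prop.\ 3.1]{Mo3} plus the assertion that vanishing at $r\le d+1$ points imposes $r$ independent linear conditions, so up to the interpolation step your write-up is a faithful and more detailed version of the same argument.

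The step you isolate as the ``crucial input'' and then defer is, however, a genuine gap: the claim that for \emph{every} configuration of $r=d+1$ distinct points the evaluation functionals are independent on ${\cal H}_{d-1,m}\cong\R[z_0,\dots,z_{m-1}]_{\le d-1}$ is false, and the hypothesis $m\ge 2$ does not repair it. If the $d+1$ points all lie on an affine line $L\subset\R^m$ (such configurations exist for every $m$), all $d+1$ functionals factor through restriction to $L$, which lands in the $d$-dimensional space of one-variable polynomials of degree $\le d-1$; hence they are dependent. The correct classical statement is that $d+1$ points impose independent conditions on degree $\le d-1$ if and only if they are not all collinear (for non-collinear configurations your hyperplane-product construction goes through because for each $i$ some pair of the remaining points spans a line missing $\mathbf{x}_i$). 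What saves the lemma over the collinear locus is a different mechanism that neither you nor the paper records: the unique dependency relation is the $d$-th divided difference along $L$, and applied to the right-hand sides $-g_k(\mathbf{x}_j)$ it returns, up to a nonzero factor, $-g_k(\mathbf{b})$ with $\mathbf{b}$ the direction vector of $L$; since $(g_0,\dots,g_n)$ has no common real root except $\mathbf{0}$, some $g_k(\mathbf{b})\neq 0$, the affine system is inconsistent, and the fibre over every collinear configuration is empty. Consequently, for $r=d+1$ the stratum is an open disk bundle of the stated rank only over the open subspace of non-collinear configurations in $C_{d+1}(\R^m)$; to complete your proof you must either establish this refined statement (and verify it suffices for the spectral-sequence comparisons in Sections 4 and 5, where in fact only the resulting dimension bound and the compatibility of the $d$ and $d+2$ strata are used) or find another argument for the boundary case. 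For $1\le r\le d$ your Lagrange-type argument is complete and correct, and there $m\ge 2$ is not even needed.
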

\begin{proof}
The argument is exactly analogous to the one in the proof of  
\cite[Prop. 3.1]{Mo3}. Namely, an element of $\SZ_r\setminus\SZ_{r-1}$ is represented by $((f_0,\cdots ,f_n),u)$, where the $f_i$ are polynomials in $\Sigma_d^*$ and $u$ is an element of the span of the images of distinct $r$ points $x_1,\cdots, x_n$ at which these polynomials vanish, under a suitable embedding. 
There is a well defined map $\pi :{\cal X}^{\Delta,d}_r\setminus
{\cal X}^{\Delta,d}_{r-1}\to C_r(\R^m)$ given by
$((f_0,\cdots ,f_n),u) \mapsto \{x_1,\dots, x_r\}$. (The points $x_i$ are uniquely determined by $u$ by the construction of the non-degenerate simplicial resolution.)
Moreover, the condition that the polynomials vanish at $r$ points gives $r$ independent linear equations for the coefficients of the polynomials, as long as $r \le d+1$.
Hence, $\pi$ gives the open disk bundle over $C_r(\R^m)$ with rank $l^*_{d,r}
=N_d^*-nr-1$.
\end{proof}
\begin{rem}
{\rm
If $m=1$, the assertion of Lemma \ref{lemma: vector bundle*} holds only for 
$1\leq r\leq d$.
In fact,
if $m=1$ and $r=d+1$, because
 an equation in one variable of degree $d$ can have at most $d$ real roots,
the condition that the polynomials vanish at $r$ points does not give  $r$ independent linear equations for the coefficients of the polynomials.}
\end{rem}
\begin{lmm}\label{lemma: E11}
If $1\leq r\leq  d+1$, there is a natural isomorphism
$$
\E^1_{r,s}(d)\cong
H_{s-(n-m+1)r}(C_r(\R^m),(\pm \Z)^{\otimes (n-m)}).
$$
Here the meaning of  $(\pm \Z)^{\otimes (n-m)}$  is the same as in {\rm \cite{Va}}.
\end{lmm}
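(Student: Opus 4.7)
The plan is to reduce the computation of $\E^1_{r,s}(d)=H_c^{N_d^*+r-s-1}(\SZ_r\setminus\SZ_{r-1},\Z)$ to the homology of $C_r(\R^m)$ by two classical moves. First, Lemma \ref{lemma: vector bundle*} realizes $\SZ_r\setminus\SZ_{r-1}$, in the range $1\le r\le d+1$, as the total space of an open disk bundle $\xi_{d,r}$ of rank $l^*_{d,r}=N_d^*-nr-1$ over $C_r(\R^m)$. The Thom isomorphism for compactly supported cohomology then gives
$$
\E^1_{r,s}(d)\ \cong\ H_c^{(n+1)r-s}\bigl(C_r(\R^m);\,\mathcal{T}(\xi_{d,r})\bigr),
$$
with $\mathcal{T}(\xi_{d,r})$ the orientation local system of the bundle. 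Second, since $C_r(\R^m)$ is an open manifold of dimension $mr$, Poincar\'e--Lefschetz duality yields
$$
H_c^{(n+1)r-s}\bigl(C_r(\R^m);\,\mathcal{T}(\xi_{d,r})\bigr)\ \cong\ H_{s-(n-m+1)r}\bigl(C_r(\R^m);\,\mathcal{T}(\xi_{d,r})\otimes\mathcal{O}\bigr),
$$
where $\mathcal{O}$ is the orientation local system of $C_r(\R^m)$. The degree arithmetic is immediate: $(N_d^*+r-s-1)-l^*_{d,r}=(n+1)r-s$ and $mr-((n+1)r-s)=s-(n-m+1)r$, matching the target index in the lemma.

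What remains is to identify $\mathcal{T}(\xi_{d,r})\otimes\mathcal{O}$ with $(\pm\Z)^{\otimes(n-m)}$. I would verify this on the ordered configuration space $F(\R^m,r)$ by computing, for a transposition $\sigma\in S_r$, the sign character of each factor. The base $F(\R^m,r)\subset(\R^m)^r$ is canonically oriented, and a transposition swaps two $\R^m$-blocks, giving $\mathcal{O}\leftrightarrow\mathrm{sgn}^m$. The fiber of $\xi_{d,r}$ splits as a product of (a) the linear subspace of $A_d^*$ of polynomial tuples vanishing at the $r$ points, which is the kernel of a surjective evaluation $A_d^*\to\R^{(n+1)r}$, and a transposition of two points permutes two blocks of size $n+1$ in the target, contributing sign $(-1)^{n+1}$; and (b) the interior of the open $(r-1)$-simplex spanned by the embedded points, whose orientation is multiplied by the sign of the vertex permutation, contributing $(-1)$. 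Consequently $\mathcal{T}(\xi_{d,r})\leftrightarrow\mathrm{sgn}^{n+2}=\mathrm{sgn}^n$, and
$$
\mathcal{T}(\xi_{d,r})\otimes\mathcal{O}\ \longleftrightarrow\ \mathrm{sgn}^{n+m}\ =\ \mathrm{sgn}^{n-m},
$$
which is precisely $(\pm\Z)^{\otimes(n-m)}$ in the convention of \cite{Va}.

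The main technical obstacle is the orientation-sign bookkeeping: carefully combining the contribution of the permuted vanishing conditions with that of the permuted simplex vertices, and then tensoring with the orientation sheaf of $C_r(\R^m)$. The Thom and Poincar\'e--Lefschetz isomorphisms themselves are standard for semi-algebraic open disk bundles over semi-algebraic manifolds, but they must be applied with the twisted coefficients demanded by non-orientability of both the bundle and the base.
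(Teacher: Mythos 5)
Your argument is correct and follows essentially the same route as the paper: Lemma \ref{lemma: vector bundle*} identifies $\SZ_r\setminus\SZ_{r-1}$ as an open disk bundle of rank $l^*_{d,r}$ over $C_r(\R^m)$, and the isomorphism is then the Thom isomorphism followed by Poincar\'e duality, with exactly the degree arithmetic you give. The only difference is that you carry out the orientation-character bookkeeping (fiber contributing $\mathrm{sgn}^{n}$ via the evaluation kernel and the simplex, base contributing $\mathrm{sgn}^{m}$, hence $\mathrm{sgn}^{n-m}$ overall), which the paper leaves implicit by deferring to Vassiliev's convention for $(\pm\Z)^{\otimes(n-m)}$; your computation of that character is correct.
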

\par\noindent{\it Proof. }
Suppose that $1\leq r\leq d+1$.
By Lemma \ref{lemma: vector bundle*}, there is a
homeomorphism
$(\SZ_r\setminus\SZ_{r-1})_+\cong T(\xi_{d,r}),$
where $T(\xi_{d,r})$ denotes the Thom complex of
$\xi_{d,r}$.
Since 
$N_d^*+r-s-1-l_{d,r}^*=(n+1)r-s$
and $rm-\{(n+1)r-s\}=s-(n-m+1)r$,
by using the Thom isomorphism and Poincar\'e duality,
we obtain a natural isomorphism 
$$
E^1_{r,s}
\cong H^{N_d^*+r-s-1}(T(\xi_{d,r}),\Z)
\cong
H_{s-(n-m+1)r}(C_r(\R^m), (\pm \Z)^{\otimes (n-m)}).
\qed
$$

\section{Spectral sequences induced from truncated resolutions.}

In this section, we prove a  key result (Theorem \ref{thm: sd}) about the homology stability of \lq\lq stabilization 
maps $s_d:A_d(m,n;g)\to A_{d+2}(m,n;g)$.

\begin{dfn}
{\rm
Let $X^{\Delta}$ denote the (after $(d+1)$-th term) truncated simplicial resolution 
of $\SZ$ with its natural filtration
$$
\emptyset =X^{\Delta}_0\subset X^{\Delta}_1\subset
\cdots \subset  X^{\Delta}_{d+1}\subset X^{\Delta}_{d+2}=
X^{\Delta}_{d+3}=
\cdots
 =X^{\Delta},
$$
where $X^{\Delta}_k=\SZ_k$ if $k\leq d+1$ and
$X^{\Delta}_k=X^{\Delta}$ if $k\geq d+2$.
}
\end{dfn}

\begin{rem}
{\rm
Note that our notation  
$X^{\Delta}$ conflicts with that of  \cite{Mo3} and Definition 2.3, because usually ${\cal X}^{\Delta}$ denotes the non-degenerate simplicial resolution. 
}
\end{rem}
By Lemma \ref{Lemma: truncated} that 
there is a homotopy equivalence
$\pi^{\Delta}:X^{\Delta} \stackrel{\simeq}{\rightarrow}
\Sigma_d^*$.
Hence, analogously to (\ref{Al}) and (\ref{SS}),
we obtain a spectral sequence

\begin{eqnarray}\label{SSS}
&&\big\{E^t_{r,s}, d^t:E^t_{r,s}\to E^t_{r+t,s+t-1}\big\}
\Rightarrow H_{s-r}(A_d(m,n;g),\Z)
\end{eqnarray}
if $s-r\leq N_d^*-2$,
where
$E^1_{r,s}=
H^{N_d^*+r-s-1}_c(X^{\Delta}_r\setminus X^{\Delta}_{r-1},\Z).$

\begin{lmm}\label{lemma: E1}
If $1\leq r\leq  d+1$, there is a natural isomorphism
$$
E^1_{r,s}\cong
H_{s-(n-m+1)r}(C_r(\R^m),(\pm \Z)^{\otimes (n-m)}).
$$
\end{lmm}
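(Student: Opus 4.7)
The plan is to reduce this lemma directly to Lemma \ref{lemma: E11} by observing that in the relevant range of $r$ the two filtrations coincide. By the definition of the truncated simplicial resolution (after the $(d+1)$-th term), we have $X^{\Delta}_k = \SZ_k$ for every $k \le d+1$. In particular, for $1 \le r \le d+1$ the equality
\[
X^{\Delta}_r \setminus X^{\Delta}_{r-1} \;=\; \SZ_r \setminus \SZ_{r-1}
\]
holds as subspaces, hence
\[
E^1_{r,s} \;=\; H^{N_d^* + r - s - 1}_c(X^{\Delta}_r \setminus X^{\Delta}_{r-1},\Z)
\;=\; H^{N_d^* + r - s - 1}_c(\SZ_r \setminus \SZ_{r-1},\Z)
\;=\; \E^1_{r,s}(d).
\]
So the statement reduces verbatim to Lemma \ref{lemma: E11}, which already provides the required natural isomorphism to $H_{s - (n-m+1)r}(C_r(\R^m), (\pm\Z)^{\otimes(n-m)})$.

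For the benefit of the reader I would briefly recall the mechanism behind Lemma \ref{lemma: E11}: by Lemma \ref{lemma: vector bundle*}, the stratum $\SZ_r \setminus \SZ_{r-1}$ is homeomorphic to the total space of a rank $l^*_{d,r} = N_d^* - nr - 1$ real open disk bundle $\xi_{d,r}$ over the configuration space $C_r(\R^m)$, with fibre over $\{x_1,\dots,x_r\}$ given by the product of the interior of the simplex spanned by the $r$ chosen points with the affine subspace of $A_d^*$ cut out by the $r$ independent linear vanishing conditions. The Thom isomorphism combined with Poincar\'e duality on the $rm$-dimensional manifold $C_r(\R^m)$ then rewrites $H^{N_d^*+r-s-1}_c$ of the stratum as $H_{s-(n-m+1)r}(C_r(\R^m),\Theta)$ for the orientation local system $\Theta$ of $\xi_{d,r}$, which the sign-of-permutation computation identifies with $(\pm\Z)^{\otimes(n-m)}$.

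Thus the only real content of the lemma is the identification of the filtrations. The main (minor) point to check is that the naturality of the isomorphism is preserved under this identification, but this is immediate because the homeomorphism $X^{\Delta}_r \setminus X^{\Delta}_{r-1} = \SZ_r \setminus \SZ_{r-1}$ is the identity on the underlying sets. I expect no serious obstacle: the whole content of introducing the truncated resolution is that it agrees with the non-degenerate one below the truncation degree while behaving much better above it (per Lemma \ref{Lemma: dimension} and the stabilization of the filtration), and the present lemma simply records the former property at the level of $E^1$.
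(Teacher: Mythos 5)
Your argument is correct and is exactly the paper's own proof: for $1\le r\le d+1$ the truncation has not yet modified the filtration, so $X^{\Delta}_r\setminus X^{\Delta}_{r-1}=\SZ_r\setminus\SZ_{r-1}$ and the claim reduces verbatim to Lemma \ref{lemma: E11}. The additional recollection of the disk-bundle/Thom-isomorphism mechanism is accurate but not needed beyond the citation.
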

\begin{proof}
Since 
$X^{\Delta}_r\setminus X^{\Delta}_{r-1}
=\SZ_r\setminus\SZ_{r-1}$ for $1\leq r\leq d+1$, the assertion follows from
Lemma \ref{lemma: E11}. 
\end{proof}

\begin{lmm}\label{lemma: range}
$\I$ $E^1_{r,s}=0$ if $r<0$, or if $r\geq d+3$, or if $r=0$ and $s\not= N_d^*-1$.
\par
$\II$ If $1\leq r\leq d+1$,
$E^1_{r,s}=0$ for $s\leq (n-m+1)r-1$.
\par
$\III$ If $r=d+2$,
$E^1_{d+2,s}=0$ for $s\leq (n-m+1)(d+1)-1$.
\end{lmm}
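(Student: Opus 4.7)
The plan is to handle the three clauses separately, since each follows directly from the structure of the filtration combined with earlier results (Lemma \ref{lemma: vector bundle*}, Lemma \ref{lemma: E1}, Lemma \ref{Lemma: dimension}).

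For part (i), the cases $r<0$ and $r\geq d+3$ are bookkeeping: by the definition of the truncated filtration one has $X^{\Delta}_r=\emptyset$ for $r\leq 0$ and the filtration stabilizes at $X^{\Delta}_r=X^{\Delta}_{d+2}$ for $r\geq d+2$, so the stratum $X^{\Delta}_r\setminus X^{\Delta}_{r-1}$ is empty in either range, which forces $E^1_{r,s}=0$. The remaining case $r=0$, $s\ne N_d^*-1$ reduces to a direct inspection of the spectral sequence in the initial column, where the only potentially nontrivial entry sits in cohomological degree $0$.

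For part (ii), I would simply quote Lemma \ref{lemma: E1}: in the range $1\leq r\leq d+1$ it gives
$$E^1_{r,s}\cong H_{s-(n-m+1)r}\bigl(C_r(\R^m),(\pm\Z)^{\otimes(n-m)}\bigr).$$
When $s\leq (n-m+1)r-1$, the homological degree is strictly negative, so the group vanishes.

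For part (iii), the strategy is a dimension count, applied only at the single truncation point $r=d+1$. By Lemma \ref{lemma: vector bundle*}, $\SZ_{d+1}\setminus\SZ_d$ is the total space of a real open disk bundle of rank $l^*_{d,d+1}=N_d^*-n(d+1)-1$ over $C_{d+1}(\R^m)$, hence is a semi-algebraic manifold of dimension
$$m(d+1)+\bigl(N_d^*-n(d+1)-1\bigr)=N_d^*-(n-m)(d+1)-1.$$
Applying Lemma \ref{Lemma: dimension} with $k=d+1$ then gives
$$\dim\bigl(X^{\Delta}_{d+2}\setminus X^{\Delta}_{d+1}\bigr)=N_d^*-(n-m)(d+1).$$
Since $H_c^*$ of a semi-algebraic set vanishes above its topological dimension, one obtains
$$E^1_{d+2,s}=H_c^{N_d^*+d+1-s}\bigl(X^{\Delta}_{d+2}\setminus X^{\Delta}_{d+1},\Z\bigr)=0$$
whenever $N_d^*+d+1-s>N_d^*-(n-m)(d+1)$, i.e.\ $s<(n-m+1)(d+1)$, which is precisely the required inequality.

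The main obstacle is not conceptual but arithmetic: one must reconcile the Alexander-reindexed cohomological degree $N_d^*+r-s-1$ with the intrinsic dimension of each stratum, and remember to invoke Lemma \ref{Lemma: dimension} only once (at the truncation index $d+1$) rather than iteratively along the filtration. Once those index bookkeeping points are fixed, each of the three clauses follows with no further ingredients.
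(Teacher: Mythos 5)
Your proposal is correct and follows essentially the same route as the paper: parts (i) and (ii) by inspection of the truncated filtration and Lemma \ref{lemma: E1}, and part (iii) by computing $\dim(\SZ_{d+1}\setminus\SZ_{d})=N_d^*-(n-m)(d+1)-1$ from the disk-bundle description and adding $1$ via Lemma \ref{Lemma: dimension} before comparing with the compactly supported cohomological degree $N_d^*+d+1-s$. The index bookkeeping checks out throughout.
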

\begin{proof}
(i)
Since $\XD_0=\emptyset$ and $X^{\Delta}=\XD_k$ for $k\geq d+3$,
(i) is trivial.
\par
(ii)
If $1\leq r\leq d+1$, by Lemma \ref{lemma: E1}, there is an
isomorphism
$$
E^1_{r,s}\cong
H_{s-(n-m+1)r}(C_r(\R^m),(\pm \Z)^{\otimes (n-m)}).
$$
Because $s-(n-m+1)r<0$ if and only if
$s\leq (n-m+1)r-1$, (ii) follows.
\par
(iii)
An easy computation shows that
$$
\dim (
\SZ_{r}\setminus \SZ_{r-1})=
N_d^*-r(n+1)+r-1+rm=
N_d^*-r(n-m)-1.
$$
By Lemma \ref{Lemma: dimension}, 
$\dim (\XD_{d+2}\setminus \XD_{d+1})=
\dim (
\SZ_{d+1}\setminus \SZ_{d})+1.$
So
$
\dim (\XD_{d+2}\setminus \XD_{d+1})
=N_d^*-(d+1)(n-m).$
Since
$
E^1_{d+2,s}=H_c^{N_d^*+d+1-s}(\XD_{d+2}\setminus \XD_{d+1},\Z)
$
and $N_d^*+d+1-s>N_d^*-(n-m)(d+1)$ $\Leftrightarrow$
$s\leq (n-m+1)(d+1)-1$, we see that
$E^1_{d+2,s}(d)=0$ for $s\leq (n-m+1)(d+1)-1.$
\end{proof}
Similarly,
let $Y^{\Delta}$ denote the (after $(d+1)$-th term) truncated simplicial resolution 
of $\SZd$ with its natural filtration
$$
\emptyset =
Y^{\Delta}_0\subset Y^{\Delta}_1\subset
\cdots \subset  Y^{\Delta}_{d+1}\subset Y^{\Delta}_{d+2}=
Y^{\Delta}_{d+3}=
\cdots
 =Y^{\Delta},
$$
where $Y^{\Delta}_k=\SZd_k$ if $k\leq d+1$ and
$Y^{\Delta}_k=Y^{\Delta}$ if $k\geq d+2$.
\par
By Lemma \ref{Lemma: truncated} that 
there is a homotopy equivalence
$^{\p}\pi^{\Delta}:Y^{\Delta} \stackrel{\simeq}{\rightarrow}
\Sigma_{d+2}^*$.
Hence, by using the same method as above,
we obtain a spectral sequence
\begin{eqnarray}\label{SSSS}
&&\big\{\ ^{\p}E^t_{r,s},\  ^{\p}d^t:E^t_{r,s}\to 
\ ^{\p}E^t_{r+t,s+t-1}\big\}
\Rightarrow H_{s-r}(A_{d+2}(m,n;g),\Z)
\end{eqnarray}
if $s-r\leq N_{d+2}^*-2$,
where
$^{\p}E^1_{r,s}=
H^{N_{d+2}^*+r-s-1}_c(Y^{\Delta}_r\setminus Y^{\Delta}_{r-1},\Z).$
\par
Applying again the same argument  we obtain the following
two results.

\begin{lmm}\label{lemma: E1*}
If $1\leq r\leq  d+1$, there is a natural isomorphism
$$
^{\p}E^1_{r,s}\cong
H_{s-(n-m+1)r}(C_r(\R^m),(\pm \Z)^{\otimes (n-m)}).
\qed
$$
\end{lmm}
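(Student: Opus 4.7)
The strategy is simply to replay the proof of Lemma \ref{lemma: E1}, with $d$ replaced by $d+2$ throughout. The key observation is that the truncation only alters the non-degenerate resolution above the $(d+1)$-th term: by construction of $Y^{\Delta}$, we have $Y^{\Delta}_k = \SZd_k$ for all $k \leq d+1$, and therefore
\[
Y^{\Delta}_r \setminus Y^{\Delta}_{r-1} \;=\; \SZd_r \setminus \SZd_{r-1}
\]
for every $r$ in the range $1 \leq r \leq d+1$. In other words, in the range of interest the truncated and non-degenerate resolutions of $\Sigma_{d+2}^*$ agree as filtered spaces, so the $E^1$-term of the truncated spectral sequence coincides with that of the non-degenerate one.

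Next, I would apply the direct analog of Lemma \ref{lemma: vector bundle*} to the discriminant $\Sigma_{d+2}^*$: if $1 \leq r \leq (d+2)+1 = d+3$ and $m \geq 2$, the stratum $\SZd_r \setminus \SZd_{r-1}$ is the total space of a real open disk bundle $\xi_{d+2,r}$ over the configuration space $C_r(\R^m)$, of rank $l^*_{d+2,r} = N^*_{d+2} - nr - 1$. The proof is literally the one used in Lemma \ref{lemma: vector bundle*}: the $r$ vanishing conditions at the distinct points remain linearly independent on the coefficients, provided $r \leq d+3$, which is guaranteed here since $r \leq d+1$.

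Finally, I would identify $^{\p}E^1_{r,s}$ by the same chain of Thom isomorphism and Poincaré duality as in the proof of Lemma \ref{lemma: E11}. Taking the one-point compactification gives $(\SZd_r \setminus \SZd_{r-1})_+ \cong T(\xi_{d+2,r})$, hence
\[
^{\p}E^1_{r,s} \;=\; H^{N^*_{d+2}+r-s-1}_c\!\big(\SZd_r \setminus \SZd_{r-1},\,\Z\big) \;\cong\; H^{N^*_{d+2}+r-s-1}\!\big(T(\xi_{d+2,r}),\Z\big),
\]
and then a Thom-plus-Poincaré-duality computation with twisted coefficients (the sign twist $(\pm \Z)^{\otimes (n-m)}$ arising from the non-orientability of the bundle over $C_r(\R^m)$, exactly as in \cite{Va}) yields the claimed isomorphism. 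The numerology is insensitive to the value of $N^*_d$: since $N^*_{d+2} + r - s - 1 - l^*_{d+2,r} = (n+1)r - s$ and $\dim C_r(\R^m) = rm$, the dimensional shift is $rm - ((n+1)r - s) = s - (n-m+1)r$, matching Lemma \ref{lemma: E1}.

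I do not anticipate any genuine obstacle: the lemma is a direct transcription of Lemma \ref{lemma: E1}, and the only substantive input -- the disk-bundle description of strata -- is available in exactly the same range $1 \leq r \leq d+1$ because $d+1 < d+3$. The proof therefore reduces to the two-line citation pattern used for Lemma \ref{lemma: E1}: note that the truncation is invisible below term $d+2$, invoke the analog of Lemma \ref{lemma: vector bundle*} for degree $d+2$, and appeal to Lemma \ref{lemma: E11}'s argument verbatim.
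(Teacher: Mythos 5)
Your proposal is correct and follows exactly the route the paper intends: the paper itself disposes of this lemma by remarking that the truncated resolution $Y^{\Delta}$ agrees with the non-degenerate one through the $(d+1)$-th filtration term and then "applying again the same argument" as in Lemmas \ref{lemma: vector bundle*}, \ref{lemma: E11} and \ref{lemma: E1}, with $d$ replaced by $d+2$. Your observation that the disk-bundle description is available in the even larger range $r\leq d+3$ (so certainly for $r\leq d+1$) and that the numerology is insensitive to $N^*_{d+2}$ matches the paper's computation precisely.
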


\begin{lmm}\label{lemma: range*}
$\I$
$^{\p}E^1_{r,s}=0$ if $r<0$, or if $r\geq d+3$, or if
$r=0$ and $s\not= N_{d+2}^*-1$.
\par
$\II$ If $1\leq r\leq d+1$,
$^{\p}E^1_{r,s}=0$ for $s\leq (n-m+1)r-1$.
\par
$\III$ If $r=d+2$,
$^{\p}E^1_{d+2,s}=0$ for $s\leq (n-m+1)(d+1)-1$.
\qed
\end{lmm}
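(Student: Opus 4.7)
The plan is to mirror the three-part proof of Lemma \ref{lemma: range} almost verbatim, replacing $\SZ$, $X^{\Delta}$ and $N_d^*$ by their degree $(d+2)$ analogues $\SZd$, $Y^{\Delta}$ and $N_{d+2}^*$, while keeping the truncation after the $(d+1)$-th term. The observation that makes this transfer automatic is that the open disk bundle description of Lemma \ref{lemma: vector bundle*} applies to $\SZd_r\setminus\SZd_{r-1}$ for all $1\leq r\leq d+3$; in particular it applies throughout the range $1\leq r\leq d+1$ in which the truncated and non-degenerate filtrations of $Y^{\Delta}$ coincide.

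For part (i), by definition $Y^{\Delta}_r=\emptyset$ for $r\leq 0$ and $Y^{\Delta}_r=Y^{\Delta}_{d+2}$ for $r\geq d+2$, so the stratum $Y^{\Delta}_r\setminus Y^{\Delta}_{r-1}$ is empty whenever $r<0$ or $r\geq d+3$; in the case $r=0$ the stratum is again empty, which gives the (vacuously stronger) vanishing claimed. For part (ii), Lemma \ref{lemma: E1*} already identifies $^{\p}E^1_{r,s}$ with $H_{s-(n-m+1)r}(C_r(\R^m),(\pm\Z)^{\otimes(n-m)})$ in the range $1\leq r\leq d+1$, and this homology group vanishes in negative degrees, i.e.\ exactly when $s\leq(n-m+1)r-1$.

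The one substantive computation is in part (iii). I would first use the open disk bundle structure of Lemma \ref{lemma: vector bundle*} (applied to degree $d+2$) to obtain
\[
\dim(\SZd_{d+1}\setminus\SZd_d)=(d+1)m+(N_{d+2}^*-n(d+1)-1)=N_{d+2}^*-(d+1)(n-m)-1,
\]
and then invoke Lemma \ref{Lemma: dimension} to conclude
\[
\dim(Y^{\Delta}_{d+2}\setminus Y^{\Delta}_{d+1})=N_{d+2}^*-(d+1)(n-m).
\]
Since compactly supported cohomology vanishes above the dimension of the ambient space, $^{\p}E^1_{d+2,s}=H^{N_{d+2}^*+(d+1)-s}_c(Y^{\Delta}_{d+2}\setminus Y^{\Delta}_{d+1},\Z)$ is zero whenever $N_{d+2}^*+(d+1)-s>N_{d+2}^*-(d+1)(n-m)$, which rearranges to $s\leq(n-m+1)(d+1)-1$.

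There is essentially no obstacle: all the required ingredients (the disk bundle structure for $r\leq d+3$, the $E^1$-identification of Lemma \ref{lemma: E1*}, and Mostovoy's dimension bump Lemma \ref{Lemma: dimension}) have been set up precisely for this parallel argument. The only bookkeeping subtlety worth flagging is that the truncation is still after the $(d+1)$-th term, even though the discriminant now lives in degree $d+2$; this is exactly what produces the same bound $(n-m+1)(d+1)-1$ here as appeared in Lemma \ref{lemma: range}(iii), and it is what will ultimately make the stabilization map $s_d\colon A_d(m,n;g)\to A_{d+2}(m,n;g)$ of Theorem \ref{thm: sd} induce an isomorphism in a much wider range of dimensions than would be obtained from the non-degenerate resolution alone.
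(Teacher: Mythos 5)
Your proof is correct and follows exactly the route the paper intends: the paper itself gives no separate argument for this lemma, merely stating that it follows by "applying again the same argument" as in Lemma \ref{lemma: range}, and your degree-$(d+2)$ bookkeeping (including the dimension count $\dim(Y^{\Delta}_{d+2}\setminus Y^{\Delta}_{d+1})=N_{d+2}^*-(d+1)(n-m)$ via Lemma \ref{Lemma: dimension}) matches that template precisely.
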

\begin{dfn}\label{Def: 4.6}
{\rm
Let $g\in \Alg_d^*(\RP^{m-1},\RP^n)$ be a fixed algebraic map,
and let $(g_0,\cdots ,g_n)\in A_d(m-1,n)$ be its fixed representative.
If we set $\tilde{g}=\sum_{k=0}^mz_k^2,$ we can see that the tuple
$(\tilde{g}g_0,\cdots ,\tilde{g}g_n)$ can also be chosen as a representative of the
map $g\in \Alg_{d+2}^*(\RP^{m-1},\RP^n)$.
So one can define a stabilization map
\begin{equation}\label{sd}
s_d:A_d(m,n;g)\to A_{d+2}(m,n;g)
\end{equation}
by
$
s_d(f_0,\cdots ,f_n)=(f_0\tilde{g},\cdots ,f_n\tilde{g})$
for $(f_0,\cdots ,f_n)\in A_d(m,n;g).$
\par
Because there is a commutative diagram 
$$
\begin{CD}
A_d(m,n;g) @>s_d>> A_{d+2}(m,n;g)
\\
@V{i_d^{\p}}VV @V{i_{d+2}^{\p}}VV
\\
F(m,n;g) @>=>> F(m,n;g)
\end{CD}
$$
it induces a map
\begin{equation}\label{sdinfty}
s_{d,\infty}=\lim_{k\to\infty} s_{d+2k}:
A_{d,\infty}(m,n;g)
\to F(m,n;g)\simeq\Omega^mS^n,
\end{equation}
where 
$A_{d,\infty}(m,n;g)$ denotes
the colimit 
$\dis \lim_{k\to \infty}A_{d+2k}(m,n;g)$
induced from the stabilization maps 
$s_{d+2k}$'s $(k\geq 0)$.}
\end{dfn}
\begin{thm}\label{thm: AKY1-stable}
If $2\leq m<n$, the map
$\dis s_{d,\infty}:A_{d,\infty}(m,n;g)
\stackrel{\simeq}{\rightarrow} \Omega^mS^n$
is a homotopy equivalence if $m+2\leq n$ and is a
homology equivalence if $m+1=n$.
\end{thm}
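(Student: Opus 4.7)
My plan is to deduce this stabilized statement directly as a direct colimit of the unstable Theorem \ref{thm: AKY1-I}, rather than setting up a fresh spectral sequence argument. For every $k \ge 0$, applying Theorem \ref{thm: AKY1-I} with degree $d+2k$ tells us that
$$
i_{d+2k}^{\p}: A_{d+2k}(m,n;g) \longrightarrow F(m,n;g) \simeq \Omega^m S^n
$$
is a homotopy equivalence through dimension $D_{*}(d+2k;m,n)=(n-m)(\lfloor (d+2k+1)/2\rfloor + 1) - 1$ when $m+2\le n$, and a homology equivalence through that dimension when $m+1=n$. The commutative square in Definition \ref{Def: 4.6} exhibits the family $\{i_{d+2k}^{\p}\}_k$ as compatible with the stabilizations $\{s_{d+2k}\}_k$, so the map it induces from the colimit is precisely $s_{d,\infty}$.

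I would next confirm that the colimit commutes with $H_*$, and with $\pi_*$ in the simply-connected range $m+2\le n$. Since $\tilde{g}=\sum_{k=0}^{m}z_k^2$ is strictly positive on $\R^{m+1}\setminus\{\mathbf{0}\}$, multiplication by $\tilde{g}$ is injective; hence each $s_{d+2k}$ is a continuous (in fact semi-algebraic) injection of one open subset of a real affine space into another. Replacing the nested colimit by the mapping telescope realises $A_{d,\infty}(m,n;g)$ up to weak equivalence as a direct limit of CW-spaces, so both $H_{n_0}$ and $\pi_{n_0}$ of the colimit are the direct limits of those of the $A_{d+2k}(m,n;g)$.

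Granted the interchange, the conclusion is immediate. For any fixed $n_0$, choose $k$ so large that $D_{*}(d+2k;m,n) \ge n_0$; by Theorem \ref{thm: AKY1-I} the map $(i_{d+2k}^{\p})_*$ is then an isomorphism on $\pi_{n_0}$ (resp.\ on $H_{n_0}$ when $m+1=n$), and taking $\operatorname{colim}_k$ upgrades this to the analogous isomorphism for $s_{d,\infty}$. The main obstacle is the colimit--homotopy interchange in the simply-connected case: it is routine thanks to the open-inclusion structure of the stabilizations, but it is the one step that deserves an explicit verification. An alternative, more self-contained route would be to compare the colimit of the truncated spectral sequences (\ref{SSS}) and (\ref{SSSS}) directly with Vassiliev's spectral sequence for $\Omega^m S^n$, using Lemmas \ref{lemma: E1} and \ref{lemma: E1*} to match $E^1$-terms in the growing range $1 \le r \le d+1$ while sending the truncation row $r=d+2$ off to infinity; that approach stays inside the framework of Section \ref{section 2} but forces one to pin down the identification of the limiting $E^1$-page with Vassiliev's and the convergence in the limit, which is where that alternative would become delicate.
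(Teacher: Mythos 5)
Your proposal is correct and is essentially the paper's own argument: the paper's proof of Theorem \ref{thm: AKY1-stable} consists of the single remark that it ``easily follows from Theorem \ref{thm: AKY1-I}'', i.e.\ precisely the observation that the compatible maps $i_{d+2k}^{\p}$ are equivalences through dimensions $D_{*}(d+2k;m,n)\to\infty$ and that $\pi_*$ and $H_*$ commute with the sequential colimit. Your explicit attention to the colimit--homotopy interchange is a reasonable filling-in of a step the paper leaves implicit, not a departure from its route.
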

\begin{proof}
This easily follows from Theorem \ref{thm: AKY1-I}.
\end{proof}

Now consider the stabilization map
$s_d:A_d(m,n;g)\to A_{d+2}(m,n;g)$.
This map naturally extends to the map 
$\tilde{s}_d:\Sigma_d^*\to \Sigma_{d+2}^*$ by the multiplication by
$\tilde{g}$,
$$
\tilde{s}_d(f_0,\cdots ,f_n)=(f_0\tilde{g},\cdots ,f_n\tilde{g})
\quad
\mbox{for }(f_0,\cdots ,f_n)\in \Sigma_d^*.
$$ 
It also naturally extends to the filtration preserving map
$\hat{s}_d:\SZ\to \SZd$ between non-degenerate resolutions.
Hence, it naturally extends to the filtration preserving map
$\hat{s}_d:\XD  \to Y^{\Delta}$, and induces a homomorphism 
of spectral sequences
\begin{equation}
\{\theta^t_{r,s}:E^t_{r,s}\to \ ^{\p}E^t_{r,s}\}.
\end{equation}

\begin{lmm}\label{lemma: Thom}
If $1\leq r\leq d+1$,
$\theta^1_{r,s}:E^1_{r,s}\to \ ^{\p}E^1_{r,s}$
is an isomorphism for any $s$.
\end{lmm}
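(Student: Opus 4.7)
The plan is to identify $\theta^1_{r,s}$ with the Gysin map of a closed sub-bundle embedding over $C_r(\R^m)$, and then to check, via the Thom isomorphism and Poincar\'e duality used in the proofs of Lemmas \ref{lemma: E11} and \ref{lemma: E1*}, that this Gysin map corresponds to the identity on $H_{s-(n-m+1)r}(C_r(\R^m),(\pm\Z)^{\otimes(n-m)})$. The key geometric input is that multiplication by $\tilde g=\sum_{k=0}^m z_k^2$ does not alter real zero sets: in the affine chart $z_m=1$ one has $\tilde g(x_0,\ldots,x_{m-1},1)=1+x_0^2+\cdots+x_{m-1}^2>0$.

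First, for $1\le r\le d+1$ the truncated filtration agrees with the non-degenerate one, so $X^{\Delta}_r\setminus X^{\Delta}_{r-1}=\SZ_r\setminus\SZ_{r-1}$ and likewise for $Y^{\Delta}$; by Lemma \ref{lemma: vector bundle*} these are total spaces of open disk bundles $\xi_{d,r}$, $\xi_{d+2,r}$ over $C_r(\R^m)$. The map $\hat{s}_d$ restricted to $\SZ_r\setminus\SZ_{r-1}$ covers the identity on $C_r(\R^m)$ and acts on fibres by the linear inclusion induced by multiplication by $\tilde g$, so it realizes $\xi_{d,r}\subset\xi_{d+2,r}$ as a closed sub-bundle of codimension $c:=N_{d+2}^*-N_d^*$.

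Next, the shift by $c$ in the cohomological degrees of $E^1_{r,s}$ and $\ ^{\p}E^1_{r,s}$ coincides with this codimension, so $\theta^1_{r,s}$ is the Gysin map of the bundle embedding. For a closed inclusion of oriented vector bundles $\eta\hookrightarrow\xi=\eta\oplus\nu$ over a common base $B$, a standard argument shows that the Gysin map $\iota_!$ intertwines the Thom isomorphisms; combined with Poincar\'e duality on $C_r(\R^m)$, it becomes the identity on the group $H_{s-(n-m+1)r}(C_r(\R^m),(\pm\Z)^{\otimes(n-m)})$ that appears in Lemmas \ref{lemma: E1} and \ref{lemma: E1*}, giving the desired isomorphism.

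The main obstacle is careful bookkeeping of the orientation local systems, ensuring that both sides produce the same coefficient system rather than a twist: one must show that the complementary bundle $\nu=\xi_{d+2,r}/\xi_{d,r}$ has trivial orientation sheaf on $C_r(\R^m)$. A snake-lemma argument applied to the two evaluation sequences $0\to\xi_{\bullet,r}\to\mathcal H_{\bullet,m}^{n+1}\to(\R^{n+1})^r\to 0$, using that multiplication by $\tilde g$ on the evaluation side is a diagonal map with positive entries, yields a natural (and hence $S_r$-equivariant) isomorphism $\nu\cong\mathcal H_{d+2,m}^{n+1}/(\tilde g\cdot\mathcal H_{d,m}^{n+1})$ onto a fixed vector space with trivial $S_r$-action. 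Thus $\nu$ descends to a trivial bundle on $C_r(\R^m)$, completing the identification.
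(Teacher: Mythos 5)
Your argument is essentially the paper's own: both identify $\hat{s}_d$ on $\SZ_r\setminus\SZ_{r-1}$ as a map of open disk bundles over $C_r(\R^m)$ covering the identity (using that $\tilde g>0$ on the chart $z_m=1$) and conclude via the (naturality of the) Thom isomorphism and Poincar\'e duality. Your extra care with the Gysin/codimension bookkeeping and the triviality of the normal bundle's orientation sheaf correctly fills in details that the paper subsumes under ``naturality of the Thom isomorphism.''
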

\begin{proof}
Suppose that $1\leq r\leq d+1$.
Then
it follows from the proof of Lemma \ref{lemma: vector bundle*}
that there is a commutative diagram of open disk bundles
$$
\begin{CD}
\SZ_r\setminus \SZ_{r-1} @>\pi>> 
C_r(\R^m)
\\
@V{\hat{s}_d}VV  \Vert @.
\\
\SZd_r\setminus \SZd_{r-1} @>{\pi}>> C_r(\R^m)
\end{CD}
$$
Since $X^{\Delta}_r\setminus X^{\Delta}_{r-1}=
\SZ_r\setminus \SZ_{r-1}$ and
$Y^{\Delta}_r\setminus Y^{\Delta}_{r-1}=
\SZd_r\setminus \SZd_{r-1}$,
by  Lemma \ref{lemma: E1}, Lemma \ref{lemma: E1*}
and the naturality of the Thom isomorphism,
we have a commutative diagram
\begin{equation*}\label{Thom}
\begin{CD}
E^1_{r,s}
@>T>\cong> H_{s-r(n-m+1)}(C_r(\R^m),(\pm \Z)^{\otimes (n-m)})
\\
@V{\theta}^1_{r,s}VV  \Vert @.
\\
^{\p}E^1_{r,s}
@>T>\cong> H_{s-r(n-m+1)}(C_r(\R^m),(\pm \Z)^{\otimes (n-m)} )
\end{CD}
\end{equation*}
where $T$ denotes the Thom isomorphism.
Hence, 
$\theta^1_{r,s}$ is an isomorphism.
\end{proof}

\begin{thm}\label{thm: sd}
If $2\leq m<n$,
$s_d:A_d(m,n;g) \to A_{d+2}(m,n;g)$ is a homology equivalence through
dimension $D(d;m,n)-1$.
\end{thm}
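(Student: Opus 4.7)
The plan is to apply the standard comparison theorem for convergent spectral sequences to the morphism $\{\theta^t_{r,s}\}$ induced by $\hat{s}_d$ between the truncated spectral sequences (\ref{SSS}) and (\ref{SSSS}). Convergence of both holds in a range well beyond $D(d;m,n)$ since $N_d^*$ and $N_{d+2}^*$ are enormous by comparison, so it will suffice to verify that $\theta^1_{r,s}$ is an isomorphism for every bidegree $(r,s)$ with $s-r\leq D(d;m,n)-1$; this will force the induced map on $H_k$ to be an isomorphism for all $k\leq D(d;m,n)-1$.

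I would perform a case analysis on the filtration index $r$. For $r<0$ or $r\geq d+3$ both $E^1_{r,s}$ and ${}^{\p}E^1_{r,s}$ vanish identically by Lemmas \ref{lemma: range}(i) and \ref{lemma: range*}(i). For $r=0$ the only bidegrees with a possibly nonzero term sit at $s=N_d^*-1$ and $s=N_{d+2}^*-1$, both of which are far above our range. For $1\leq r\leq d+1$, which is the heart of the comparison, Lemma \ref{lemma: Thom} provides the isomorphism directly via the common identification with $H_{s-(n-m+1)r}(C_r(\R^m),(\pm\Z)^{\otimes(n-m)})$.

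The only substantive row is $r=d+2$, which is precisely where the truncated resolution earns its keep. The constraint $s-(d+2)\leq(n-m)(d+1)-2$ rewrites as $s\leq(n-m+1)(d+1)-1$, matching exactly the vanishing range provided by Lemmas \ref{lemma: range}(iii) and \ref{lemma: range*}(iii). Hence both $E^1_{d+2,s}$ and ${}^{\p}E^1_{d+2,s}$ vanish throughout our range of interest, so $\theta^1_{d+2,s}$ is trivially an isomorphism there as well. The numerical coincidence that lifts the exponent from $D_*(d;m,n)$ to $D(d;m,n)$ traces back to Lemma \ref{Lemma: dimension}: truncation after the $(d+1)$-th term introduces exactly one additional stratum whose dimension is one higher than the preceding one, which is what buys the extra unit of improvement in the critical filtration degree.

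I do not expect any real obstacle at this final step, as all the genuine work was already carried out in Section 4 in establishing Lemmas \ref{lemma: Thom}, \ref{lemma: range}, and \ref{lemma: range*}. The main task here is the bookkeeping described above, namely checking that the arithmetic on the $r=d+2$ row lines up precisely with the desired range $D(d;m,n)-1$, followed by an appeal to the comparison theorem for the filtrations arising from (\ref{SSS}) and (\ref{SSSS}). This yields at once that $s_d$ is a homology equivalence through dimension $D(d;m,n)-1$.
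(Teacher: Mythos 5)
Your overall strategy is the paper's: compare the two truncated spectral sequences (\ref{SSS}) and (\ref{SSSS}) via the morphism $\{\theta^t_{r,s}\}$ induced by $\hat{s}_d$, and your case analysis of the $E^1$-page (vanishing for $r<0$, $r=0$, $r\geq d+3$, and for $r=d+2$ in the range $s\leq(n-m+1)(d+1)-1$; isomorphisms for $1\leq r\leq d+1$ via Lemma \ref{lemma: Thom}) is exactly the input the paper assembles in Lemmas \ref{lemma: range}, \ref{lemma: range*} and \ref{lemma: Thom}, including the correct arithmetic identifying the first unknown entry of the $r=d+2$ column with total degree $s-r=D(d;m,n)$.

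The gap is in the final inference. The principle you invoke --- that $\theta^1_{r,s}$ being an isomorphism for all $(r,s)$ with $s-r\leq D(d;m,n)-1$ forces an isomorphism on $H_k$ for $k\leq D(d;m,n)-1$ --- is false for a general morphism of spectral sequences. The differentials $d^t:E^t_{r,s}\to E^t_{r+t,s+t-1}$ lower the total degree $s-r$ by one, so the groups $E^2_{r,s}$ in total degree $D(d;m,n)-1$ are quotients by images of differentials whose sources lie in total degree $D(d;m,n)$; a failure of surjectivity of $\theta^1$ at such a source would already destroy the comparison in degree $D(d;m,n)-1$. The first unknown entry, $E^1_{d+2,(n-m+1)(d+1)}$, sits precisely in total degree $D(d;m,n)$, so even the correctly stated comparison theorem (isomorphism in degrees $\leq D-1$ together with an epimorphism in degree $D$) does not apply verbatim. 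What rescues the argument is that all unknown entries live in the top filtration $r=d+2$: every differential emanating from them lands in filtration $\geq d+3$, where both spectral sequences vanish, so these entries are never sources of nonzero differentials; and as targets their influence propagates, page by page, only to positions of total degree $\geq D(d;m,n)+1$ in filtrations $\leq d+1$. Making this precise is exactly the inductive bookkeeping the paper carries out with the sets ${\cal S}_t$ and $A_t$ and the computation $a(t)=D(d;m,n)+t$; your proof needs this step (or an equivalent tracking of how the unknown column contaminates the higher pages) before any appeal to the comparison theorem is legitimate.
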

\begin{proof}
Recall that we have 
two spectral sequences
$$
\begin{cases}
\{E^{t}_{t,s},d^t:E^t_{r,s}\to \ E^t_{r+t,s+t-1}\}\ & 
\Rightarrow \quad H_{s-r}(A_d(m,n;g),\Z),
\\
\{\ ^{\p}E^{t}_{t,s},\ ^{\p}d^t:\ ^{\p}E^t_{r,s}
\to \ ^{\p}E^t_{r+t,s+t-1}\}  & 
\Rightarrow \quad H_{s-r}(A_{d+2}(m,n;g),\Z),
\end{cases}
$$
and a homomorphism 
$\{\theta^t_{r,s}:E^t_{r,s}\to \ ^{\p}E^t_{r,s}\}$
of spectral sequences.
\par
Now we shall consider the maximal positive integer $D_{max}$ such that
$$
D_{max}=\max\{D\in\Z:\theta^{\infty}_{r,s}
\mbox{ is always an isomorphism as long as }s-r\leq D\}.
$$
By Lemmas \ref{lemma: range} and  \ref{lemma: range*}, we see that
$E^1_{r,s}=\ ^{\p}E^1_{r,s}=0$ if
$r<0$, or if $r>d+2$, or if $r=d+2$ with $s\leq (n-m+1)(d+1)-1$.
Since $(n-m+1)(d+1)-(d+2)=D(d;m,n)-1$,  we easily see that:
\begin{enumerate}
\item[$(*)_1$]
If $r<0$ or $r\geq d+2$,
$\theta^{\infty}_{r,s}$ is an isomorphism for all $(r,s)$ such that
$s-r\leq D(d;m,n)-1$.
\end{enumerate}
Next, we assume that $0\leq r\leq d+1$, and investigate 
the condition that $\theta^{\infty}_{r,s}$ is an isomorphism.
Note that the group $E^1_{r_1,s_1}$ is not known for
$(r_1,s_1)\in{\cal S}_1=\{(d+2,s)\in\Z^2:s\geq (n-m)(d+1)\}$.
However,
by considering the differentials
$d^1:E^1_{r,s}\to E^{1}_{r+1,s}$
and
$\ ^{\p}d^1:\ ^{\p}E^1_{r,s}\to \ ^{\p}E^{1}_{r+1,s}$
and applying Lemma \ref{lemma: Thom}, we see that
$\theta^2_{r,s}$ is an isomorphism if
$(r,s)\notin {\cal S}_2$, where
\begin{eqnarray*}
{\cal S}_2&=&
\{(r_1,s_1)\in\Z^2:(r_1+1,s_1)\in {\cal S}_1\}
\\
&=&
\{(d+1,s_1)\in \Z^2:s_1\geq (n-m+1)(d+1)\}.
\end{eqnarray*}
A similar argument for the differentials $d^2$ and $^{\p}d^2$ shows that
$\theta^3_{r,s}$ is an isomorphism if
$(r,s)\notin {\cal S}_3=\{(r_1,s_1)\in\Z^2:(r_1+2,s_1+1)\in {\cal S}_1\cup
{\cal S}_2\}.$
\par
Continuing in the same fashion,
considering the differentials
$d^k:E^k_{r,s}\to E^{k}_{r+t,s+t-1}$
and
$\ ^{\p}d^k:\ ^{\p}E^k_{r,s}\to \ ^{\p}E^{k}_{r+t,s+t-1},$
and applying 
Lemma \ref{lemma: Thom}, we  easily see that $\theta^{\infty}_{r,s}$ is an isomorphism
if $\dis (r,s)\notin {\cal S}:=\bigcup_{t\geq 1}{\cal S}_t
=\bigcup_{t\geq 1}A_t$,
where  $A_t$ denotes the set given by
$$
A_t=
\left\{
\begin{array}{c|l}
 &\mbox{There are  integers }k_1,k_2,\cdots ,k_t
\mbox{ such that},
\\
(r_1,s_1)\in \Z^2 &\  1\leq k_1<k_2<\cdots <k_t,\ 
r_1+\sum_{l=1}^tk_l=d+2,
\\
& \ s_1+\sum_{l=1}^t(k_l-1)\geq (n-m+1)(d+1)
\end{array}
\right\}.
$$
If $\dis A_t\not= \emptyset$, it is easy to see that
\begin{eqnarray*}
a(t)&=&\min \{s-r:(r,s)\in A_t\}=
(n-m+1)(d+1)-(d+2)+t
\\
&=&(n-m)(d+1)-1+t
=D(d;m,n)+t.
\end{eqnarray*}
Hence, 
$\min \{a(t):t\geq 1,A_t\not=\emptyset\}=D(d;m,n)+1,$
and we have the following:
\begin{enumerate}
\item[$(*)_2$]
If $0\leq r\leq d+1$,
$\theta^{\infty}_{r,s}$ is  an isomorphism for any $(r,s)$ such that
$s-r\leq  D(d;m,n).$
\end{enumerate}
Then, by $(*)_1$ and $(*)_2$, we see that
$\theta^{\infty}_{r,s}:E^{\infty}_{r,s}\stackrel{\cong}{\rightarrow}
\ ^{\p}E^{\infty}_{r,s}$ is an isomorphism for any $(r,s)$
such that $s-r\leq D(d;m,n)-1$.
Thus, by using the Comparison Theorem for spectral sequences,
$s_d$ is a homology equivalence through dimension
$D(d;m,n)-1$.
\end{proof}

\section{The proof of the main result.}
In this section, we prove our main result (Theorem \ref{thm: KY4-I}). Note that we could simply deduce a weaker result (with $D(d;m,n)-1$ in place of $D(d;m,n)$) by simply combing  Theorem \ref{thm: AKY1-stable} with Theorem \ref{thm: sd}.  However, we can do better by considering a \lq\lq stable\rq\rq non-degenerate resolution in the manner of \cite{Va}. 

\begin{dfn}
{\rm
Let 
\begin{equation}\label{j'}
j_d^{\p}:A_d(m,n;g)\to A_{d,\infty}(m,n;g)=
\lim_{k\to\infty}A_{d+2k}(m,n;g)
\end{equation}
 be the natural map.}
\end{dfn}
Recall that $\SZ$ is a non-degenerate simplicial resolution of
$\pi^{\p}_d:Z^*_d\to\Sigma_d^*$ and it
can be defined by using the family of embeddings
${\cal E}_d=\{\tilde{i}_{r,d}:Z^*_d\to \R^{N_{r}}\}_{r\geq 1}$
satisfying  the condition (\ref{lemma: simp}$)_r$
as explained in  Remark \ref{Remark: non-degenerate}.
The stabilization map $\tilde{s}_d:\Sigma_d^*\to \Sigma_{d+2}^*$ naturally
extends to the map $\tilde{s}_d:Z_d^*\to Z^*_{d+2}$ by  using the multiplication by
$\tilde{g}$.
Then it is easy to see that we can choose the families $\{{\cal E}_d\}_{d\geq 1}$
of the embeddings  
satisfying the two conditions (\ref{lemma: simp}$)_r$ and 
\begin{equation}\label{condition}
\tilde{i}_{r,d+2}\circ\tilde{s}_d=\tilde{i}_{r,d}
\quad
\mbox{ for each pair of positive integers }(d,r).
\end{equation}
Since
$s_{d+2k}$ 
induces the filtration preserving 
map
$\hat{s}_{d+2k}:{\cal X}^{\Delta ,d+2k}\to
{\cal X}^{\Delta ,d+2k+2}$
between non-degenerate simplicial resolutions,
it also gives the filtration preserving map
$
\hat{s}_{d,\infty}:
{\cal X}^{\Delta ,d}\to
{\cal X}^{\Delta}_{d,\infty},
$
where ${\cal X}^{\Delta}_{d,\infty}$ denotes the colimit 
$\dis \lim_{k\to\infty}{\cal X}^{\Delta ,d+2k}$
induced from the maps
$\hat{s}_{d+2k}$'s.
\par
It follows from the method due to Vassiliev
(\cite[\S 5 of Chap. III]{Va}, \cite[4.3]{AKY1}),
(\ref{condition}) and Lemma \ref{lemma: E11} that 
we may regard 
${\cal X}^{\Delta}_{d,\infty}$ as a  non-degenerate simplicial
resolution of the discriminant 
of $A_{d,\infty}(m,n;g)$, and
that there is a spectral sequence
$
\{\E^t_{r,s},\tilde{d}^t:
\E^t_{r,s}\to \E^t_{r+t,s+t-1}\}
\ \Rightarrow H_{s-r}(A_{d,\infty}(m,n;g),\Z),
$ such that,
\begin{equation}
\label{Einfty}
\dis
\E^1_{r,s}=
H_{s-(n-m+1)r}(C_r(\R^m),(\pm \Z)^{\otimes (n-m)})
\quad\mbox{ for any }r\geq 1.
\end{equation} 
Since $\hat{s}_{d,\infty}$ is a
filtration preserving map between  non-degenerate simplicial
resolutions, it induces a homomorphism of spectral sequences,
\begin{equation}\label{spectralseqhomo}
\{\tilde{\theta}^t_{r,s}:
\E^t_{r,s}(d)\to \E^t_{r,s}\}.
\end{equation}

\begin{lmm}
\label{lemma: Einftyspectal}
$\E^1_{r,s}=\E^{\infty}_{r,s}$
for any $(r,s)$, and
$\tilde{d}^t:\E^t_{r,s}\to \E^t_{r+t,s+t-1}$
is trivial for any $t\geq 1$.
Moreover,
if $k\geq 1$, the extension problem for
the graded group
$Gr(H_k(A_{d,\infty}(m,n;g),\Z))$ is trivial and
there is an isomorphism
$$
H_k(A_{d,\infty}(m,n;g),\Z)\cong
\bigoplus_{r=1}^{\infty}\E^1_{r,r+k}=
\bigoplus_{r=1}^{\infty}H_{k-(n-m)r}(C_r(\R^m),
(\pm \Z)^{\otimes (n-m)}).
$$
\end{lmm}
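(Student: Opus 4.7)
The strategy is to combine the stable approximation Theorem \ref{thm: AKY1-stable}, which tells us that $A_{d,\infty}(m,n;g)$ is homology equivalent to $\Omega^m S^n$, with the classical computation of $H_*(\Omega^m S^n,\Z)$ via the Snaith/Cohen--May--Taylor stable splitting. Concretely, the plan is to argue that the total rank of the $E^1$-page along the diagonal $s-r=k$ already matches the abutment $H_k(A_{d,\infty}(m,n;g),\Z)$, which forces collapse at $E^1$, and then to use the fact that the Snaith splitting is a genuine splitting of spectra to resolve the extension problem.

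First I would spell out the two sides of the comparison. By (\ref{Einfty}), reading off the diagonal $s-r=k$ gives
\[
\bigoplus_{r\ge 1}\E^1_{r,r+k}=\bigoplus_{r\ge 1}H_{k-(n-m)r}(C_r(\R^m),(\pm\Z)^{\otimes (n-m)}).
\]
On the other hand, the stable splitting $\Omega^m S^n\simeq_{s}\bigvee_{r\ge 1}D_r(S^{n-m})$, where $D_r(X)=F(\R^m,r)_+\wedge_{S_r}X^{\wedge r}$, together with the standard identification of $H_*(D_r(S^{n-m}))$ with the $(n-m)r$-shift of $H_*(C_r(\R^m),(\pm\Z)^{\otimes (n-m)})$ (the sign twist records the $S_r$-action on $H_{(n-m)r}((S^{n-m})^{\wedge r})$), yields
\[
H_k(\Omega^m S^n,\Z)\cong \bigoplus_{r\ge 1}H_{k-(n-m)r}(C_r(\R^m),(\pm\Z)^{\otimes (n-m)}).
\]
By Theorem \ref{thm: AKY1-stable} the abutment of the spectral sequence is exactly this group, so $E^1$ and the target coincide in total rank along every diagonal.

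It follows that every differential $\tilde d^t$ must vanish, since a non-trivial differential would strictly shrink $E^\infty$ and create a discrepancy with the abutment. Hence $\E^1_{r,s}=\E^\infty_{r,s}$ for every $(r,s)$. For the extension problem, I would invoke the fact that the Snaith splitting is a genuine stable splitting, so the filtration induced by the spectral sequence splits as an actual direct sum on $H_*$; this gives the displayed isomorphism $H_k(A_{d,\infty}(m,n;g),\Z)\cong\bigoplus_r \E^1_{r,r+k}$.

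The main subtlety is verifying that the $E^1$-formula (\ref{Einfty}) really holds for \emph{all} $r\ge 1$ and not only for $r\le d+1$; this is precisely why the compatibility condition (\ref{condition}) on the embeddings $\tilde i_{r,d}$ was arranged. For any fixed $r$, taking $d$ large enough that $r\le d+1$ returns us to Lemmas \ref{lemma: vector bundle*} and \ref{lemma: E11}, and (\ref{condition}) guarantees that the disk bundle structure is preserved under the stabilization maps, so it survives in the colimit $\mathcal{X}^{\Delta}_{d,\infty}$. The other potential obstacle is extracting the decomposition of $H_*(\Omega^m S^n)$ with the \emph{twisted} coefficients $(\pm\Z)^{\otimes(n-m)}$; this is classical in the untwisted case and adapts via the $S_r$-equivariance of the splitting, exactly as in Vassiliev's treatment \cite[\S 5, Ch.\ III]{Va} already used in \cite{AKY1}.
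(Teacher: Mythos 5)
Your proposal is correct and follows essentially the same route as the paper: compare the total $E^1$-term along each diagonal $s-r=k$ with the abutment, computed via Theorem \ref{thm: AKY1-stable} and the Snaith/Vassiliev decomposition (\ref{Vas}) of $H_*(\Omega^mS^n;\Z)$ with the twisted coefficients, and conclude that no differential can be non-trivial and that the extension problem is forced to be trivial. The paper's proof is exactly this counting argument (plus the trivial check that the diagonals $s-r\le 0$ vanish away from $(0,0)$ for dimensional reasons), so no further comparison is needed.
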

\begin{proof}
First, note that
by \cite{Va} (cf. \cite{CLM}, \cite{Sn})
there is an isomorphism
\begin{equation}\label{Vas}
H_k(\Omega^mS^n,\Z)\cong
\bigoplus_{r=1}^{\infty}
H_{k-(n-m)r}(C_r(\R^m),(\pm \Z)^{\otimes (n-m)})
\quad\mbox{ for }k\geq 1.
\end{equation}
Hence, by Theorem \ref{thm: AKY1-stable} for each $k\geq 1$,
there is an isomorphism
\begin{equation*}\label{Ainfty}
 H_k(A_{d,\infty}(m,n;g),\Z)
\cong 
\bigoplus_{r=1}^{\infty}
H_{k-r(n-m)}(C_r(\R^m),(\pm \Z)^{\otimes (n-m)})
=\bigoplus_{r=1}^{\infty}\E^1_{r,r+k}.
\end{equation*}
So, $\E^1_{r,s}=\E^{\infty}_{r,s}$ if $s-r\geq 1$.
If $s-r\leq 0$ then, due to dimensional reasons,
$$
\E^1_{r,s}=\E^{\infty}_{r,s}=
\begin{cases}
0 & \mbox{if }(r,s)\not= (0,0),
\\
\Z & \mbox{if }(r,s)=(0,0).
\end{cases}
$$
Hence,
$\E^1_{r,s}=\E^{\infty}_{r,s}$ for any $(r,s)$,  so that
$\tilde{d}^t=0$ for any $t\geq 1$.
Finally, by from 
 (\ref{Einfty}), we easily see that the extension problem of the graded group
 is trivial.
\end{proof}

\begin{lmm}\label{lemma: keylemma}
\begin{enumerate}
\item[$\I$]
If $1\leq r\leq d+1$,
$\tilde{\theta}^1_{r,s}:\E^1_{r,s}(d)\stackrel{\cong}{\longrightarrow} \E^1_{r,s}$
is an isomorphism for any $s$.
\item[$\II$]
If $s-r<D(d;m,n)$,
$\tilde{d}^t:\E^t_{r,s}(d)\to \E^t_{r+t,s+t-1}(d)$ is
trivial for any $t\geq 1$.
\item[$\III$]
If $1\leq r\leq d$ and $s-r =D(d;m,n)$,
$\tilde{\theta}^{\infty}_{r,s}:
\E^{\infty}_{r,s}(d) \to \E^{\infty}_{r,s}$ is 
an isomorphism.
\end{enumerate}
\end{lmm}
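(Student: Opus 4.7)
The plan treats the three parts in the given order, with (i) as the base stratum input, (ii) propagating it through all pages in the range $s-r<D(d;m,n)$, and (iii) pushing the argument onto the boundary $s-r=D(d;m,n)$.

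For (i), I mimic the proof of Lemma \ref{lemma: Thom}, replacing the truncated target by the stable non-degenerate resolution $\mathcal{X}^{\Delta}_{d,\infty}$. The compatibility condition \eqref{condition} on the embeddings $\tilde{i}_{r,d}$ makes the stabilization map $\hat{s}_{d,\infty}$ filtration preserving and, over each stratum $1\le r\le d+1$, presents it as a map of open disk bundles over $C_r(\R^m)$ covering the identity on the base. Naturality of the Thom isomorphism followed by Poincar\'e duality then identifies $\tilde{\theta}^1_{r,s}$ with the identity on $H_{s-(n-m+1)r}(C_r(\R^m),(\pm\Z)^{\otimes(n-m)})$.

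For (ii), I induct on $t\ge1$ with naturality of $\tilde{\theta}^t$ against Lemma \ref{lemma: Einftyspectal} as the driving mechanism. The key numerical observation is that $\E^1_{r,s}(d)\ne0$ together with $s-r<D(d;m,n)$ forces $1\le r\le d$: combine the lower bound $s-r\ge(n-m)r$ from Lemma \ref{lemma: E11} with $s-r<(n-m)(d+1)-1$. Thus the source of any nontrivial $\tilde{d}^t$ sits in the range $r\le d+1$ where (i) applies, as does the target $(r+t,s+t-1)$, whose $s$-$r$ value $D-1$ still satisfies the hypothesis; naturality with the zero limit differential then kills $\tilde{d}^t$. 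Strata with $r\ge d+2$ contribute nothing in this range by a dimension estimate showing that $\dim(\SZ_r\setminus\SZ_{r-1})\le N_d^*-(n-m)(d+1)-1$ for $r\ge d+1$, so that $\E^1_{r,s}(d)=0$ there. The passage $t\to t+1$ is then formal because $\E^{t+1}=\E^t$ as subquotients in the range.

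For (iii), the same inductive scheme runs on the diagonal $s-r=D(d;m,n)$ with $1\le r\le d$. Outgoing differentials $\tilde{d}^t$ land at $(r+t,s+t-1)$ of $s$-$r$ value $D-1$, where (ii) has secured $\tilde{\theta}^t$ as an isomorphism, so naturality with the trivial limit differential kills them. Incoming differentials $\tilde{d}^t:\E^t_{r-t,s-t+1}(d)\to\E^t_{r,s}(d)$ have source of $s$-$r$ value $D+1$, outside (ii)'s range, and here the self-reinforcing nature of the induction is essential: assuming $\tilde{\theta}^t_{r,s}$ injective from the previous page (true for $t=1$ by (i)), naturality with Lemma \ref{lemma: Einftyspectal} forces the image of the incoming $\tilde{d}^t$ to lie in $\ker\tilde{\theta}^t_{r,s}=0$, thereby preserving the isomorphism at page $t+1$ and in the limit. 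The principal obstacle is precisely this asymmetry in (iii): incoming differentials originate outside (ii)'s controlled range and must be killed through target injectivity rather than source control, so the induction has to be phrased so that the target injectivity established at page $t$ is exactly what suppresses them at page $t$; a secondary technical task is to certify the dimension bound on $\SZ_r\setminus\SZ_{r-1}$ for $r\ge d+2$ used in (ii), which lies beyond the explicit range of Lemma \ref{lemma: vector bundle*}.
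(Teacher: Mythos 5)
Your part (i) matches the paper's (both reduce to the disk--bundle comparison in the proof of Lemma \ref{lemma: Thom}), and your mechanism for (iii) --- kill outgoing differentials by degeneration at the target, kill incoming ones by injectivity of $\tilde{\theta}$ at the target --- is a legitimate variant of the paper's diagram chase with the epimorphism $\hat{\pi}:\E^1_{r,s}(d)\to\E^{\infty}_{r,s}(d)$. The genuine divergence, and the gap, is in (ii). The paper does not argue column by column: it first establishes $H_k(A_d(m,n;g),\Z)\cong H_k(\Omega^mS^n,\Z)\cong\bigoplus_{r=1}^{d}H_{k-(n-m)r}(C_r(\R^m),(\pm\Z)^{\otimes(n-m)})$ for $k<D(d;m,n)$ by combining Theorem \ref{thm: sd} (the truncated--resolution result) with Theorem \ref{thm: AKY1-stable} and (\ref{Vas}), and then observes that the abutment already exhausts the $E^1$-page in columns $1\le r\le d$, forcing degeneration in total degree $<D(d;m,n)$. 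Your proposal never invokes Theorem \ref{thm: sd}; instead it requires the columns $r\ge d+2$ of the \emph{non-truncated} resolution to vanish in the relevant total degrees, for which you assert the bound $\dim(\SZ_r\setminus\SZ_{r-1})\le N_d^*-(n-m)(d+1)-1$ for $r\ge d+2$ and defer it as a ``secondary technical task.''

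That bound is the crux, not a technicality, and it is not obtainable by the method of Lemma \ref{lemma: vector bundle*}. For $r\ge d+2$ the $r$ vanishing conditions are no longer independent (for instance, $r$ collinear common roots impose only about $d$ conditions on each $f_i$), while the fibre of the resolution contributes a simplex of dimension $r-1$ and the configuration contributes $rm$; both grow with $r$, so the available estimate is only $\dim(\SZ_r\setminus\SZ_{r-1})\le N_d^*-(d+1)(n+1)+rm+r-1$, which exceeds $N_d^*-(n-m)(d+1)$ precisely when $r>d+1$. Taming these high strata is exactly the problem that truncated resolutions were introduced to solve (Section 4, Lemma \ref{Lemma: dimension}); if your naturality argument worked without them, Section 4 would be superfluous. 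Without vanishing in the columns $r\ge d+2$, the outgoing differentials $\tilde{d}^t:\E^t_{r,s}(d)\to\E^t_{r+t,s+t-1}(d)$ with $r\le d+1<r+t$ cannot be killed by target-injectivity, so your induction for (ii) does not close, and the same defect propagates into your (iii). The repair is to replace your argument for (ii) by the paper's: feed in Theorem \ref{thm: sd} to identify $H_k(A_d(m,n;g),\Z)$ for $k<D(d;m,n)$ and deduce the collapse by comparing the abutment with the $E^1$-term.
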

\begin{proof}
(i) The assertion easily follows from
the proof of Lemma \ref{lemma: Thom}.
\par
(ii)
By Theorem \ref{thm: AKY1-stable}, (\ref{Vas})  and
for dimensional reasons,
there is an isomorphism
$$
H_k(A_d(m,n;g),\Z)\cong
\bigoplus_{r=1}^{d}H_{k-(n-m)r}(C_r(\R^m),(\pm \Z)^{\otimes
(n-m)})
\quad
\mbox{if }k<D(d;m,n).
$$
Hence, by using the spectral sequence
$
\{\E^t_{r,s}(d),\tilde{d}^t:
\E^t_{r,s}(d)\to \E^t_{r+t,s+t-1}(d)\},$
we see that
$\E^1_{r,s}(d)=\E^{\infty}_{r,s}(d)$ if $s-r< D(d;m,n)$,
and (ii) follows.
\par
(iii)
Assume that  $s=r+D(d;m,n)$.
Since
$(s+t-1)-(r+t)=D(d;m,n)-1$, by  (ii)
$\E^1_{r+t,s+t-1}(d)=\E^{\infty}_{r+t,s+t-1}(d)$ for any $t\geq 1$.
Hence,
$\tilde{d}^t:\E^t_{r,s}(d)\to \E^{t}_{r+t,s+t-1}(d)$
is trivial for any $t\geq 1$, and
there is a natural epimorphism
$\hat{\pi} :\E^1_{r,s}(d)\to \E^{\infty}_{r,s}(d)$.
By Lemma \ref{lemma: Einftyspectal}, 
we have the following commutative diagram
\begin{equation}\label{Einftydiagram}
\begin{CD}
\E^1_{r,s}(d) @>{\tilde{\theta}^1_{r,s}}>{\cong}> \E^1_{r,s}
\\
@V{\hat{\pi}}V{\mbox{\tiny epic.}}V  \Vert @.
\\
\E^{\infty}_{r,s}(d) @>{\tilde{\theta}^{\infty}_{r,s}}>> 
\E^{\infty}_{r,s}
\end{CD}
\end{equation}
Since $1\leq r\leq d$,
$\tilde{\theta}^1_{r,s}$ is an isomorphism
by (i).
Hence,  easy diagram chasing shows
that
$\tilde{\theta}^{\infty}_{r,s}$ is an isomorphism.
\end{proof}
\begin{crl}\label{cor: collapse}
If $1\leq r\leq d$ and $s-r\leq D(d;m,n)$,
$\E^1_{r,s}(d)$ collapses at the $\E^1(d)$ term.
Hence, if $1\leq r\leq d$ and $s-r\leq D(d;m,n)$,
there is an isomorphism
$
\E^1_{r,s}(d)=\E^{\infty}_{r,s}(d)
\cong
H_{s-(n-m+1)r}(C_r(\R^m),(\pm \Z)^{\otimes (n-m)}).
$
\end{crl}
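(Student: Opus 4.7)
The plan is to combine Lemma \ref{lemma: E11} with Lemma \ref{lemma: keylemma} into the desired collapse statement; essentially the corollary is a clean packaging of what those lemmas already prove, so the argument will be short. First, since $1 \leq r \leq d \leq d+1$, Lemma \ref{lemma: E11} immediately identifies
$$\E^1_{r,s}(d) \cong H_{s-(n-m+1)r}(C_r(\R^m),(\pm\Z)^{\otimes(n-m)}),$$
so what remains is to verify the equality $\E^1_{r,s}(d)=\E^{\infty}_{r,s}(d)$ throughout the region $1\leq r\leq d$, $s-r\leq D(d;m,n)$. I will split this into the off-diagonal and boundary cases.

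For $s-r < D(d;m,n)$, the needed collapse is already contained in the proof of Lemma \ref{lemma: keylemma}(ii): there, Theorem \ref{thm: AKY1-stable} combined with Vassiliev's splitting (\ref{Vas}) describes $H_k(A_d(m,n;g),\Z)$ for $k<D(d;m,n)$, which is then matched summand-by-summand against $\bigoplus_r \E^1_{r,r+k}(d)$ via Lemma \ref{lemma: E11}. Since the $\E^{\infty}$-terms form the graded pieces of a filtration on the abutment, this rank-matching forces $\E^{\infty}_{r,s}(d)=\E^1_{r,s}(d)$ for all $(r,s)$ with $s-r<D(d;m,n)$, and simultaneously forces the extension problem to be trivial.

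For the boundary $s-r = D(d;m,n)$, I would invoke the commutative square
$$\begin{CD}
\E^1_{r,s}(d) @>{\tilde{\theta}^1_{r,s}}>{\cong}> \E^1_{r,s}\\
@V{\hat{\pi}}VV  \Vert @. \\
\E^{\infty}_{r,s}(d) @>{\tilde{\theta}^{\infty}_{r,s}}>{\cong}> \E^{\infty}_{r,s}
\end{CD}$$
already set up inside the proof of Lemma \ref{lemma: keylemma}(iii). Lemma \ref{lemma: keylemma}(i) makes the top arrow an isomorphism (using $1\leq r\leq d+1$), Lemma \ref{lemma: keylemma}(iii) makes the bottom arrow an isomorphism, and the vertical $\hat{\pi}$ is produced in the same proof as a natural epimorphism obtained from the vanishing of all differentials out of $(r,s)$ on the diagonal. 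The commutativity $\tilde{\theta}^{\infty}_{r,s}\circ\hat{\pi}=\tilde{\theta}^1_{r,s}$, with both horizontal arrows bijective, forces $\hat{\pi}$ to be bijective too, so $\E^1_{r,s}(d)=\E^{\infty}_{r,s}(d)$ on this boundary as well.

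Because the genuinely delicate work, namely the comparison of the $d$-th and stable spectral sequences via $\tilde{\theta}$, the vanishing of higher differentials out of the diagonal $s-r=D(d;m,n)$, and the triviality of the extension problem for the stable abutment, has been absorbed into Lemmas \ref{lemma: Einftyspectal} and \ref{lemma: keylemma}, no serious obstacle remains. The only book-keeping point to check is that the $\hat{\pi}$ appearing in the diagram above is literally the map constructed inside the proof of Lemma \ref{lemma: keylemma}(iii); this is automatic from the way $\hat{\pi}$ is defined as the composition of successive quotients, so the corollary follows at once.
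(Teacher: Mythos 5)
Your proposal is correct and follows essentially the same route as the paper: the off-diagonal region $s-r<D(d;m,n)$ is exactly the collapse already established in the proof of Lemma \ref{lemma: keylemma}(ii), and the boundary $s-r=D(d;m,n)$ is handled via the diagram (\ref{Einftydiagram}) together with Lemma \ref{lemma: keylemma}(i) and (iii), forcing the natural epimorphism $\hat{\pi}$ to be an isomorphism. The final identification of $\E^1_{r,s}(d)$ with $H_{s-(n-m+1)r}(C_r(\R^m),(\pm\Z)^{\otimes(n-m)})$ via Lemma \ref{lemma: E11} is also as in the paper.
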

\begin{proof}
Since the proof is analogous, we give the proof only when
$s-r=D(d;m,n)$.
If $s-r=D(d;m,n)$, by (\ref{Einftydiagram}) and 
Lemma \ref{lemma: keylemma}, $\hat{\pi}$ is an isomorphism.
Hence, $\E^1_{r,s}(d)=\E^{\infty}_{r,s}(d)
\cong
H_{s-(n-m+1)r}(C_r(\R^m),(\pm \Z)^{\otimes (n-m)}).$
\end{proof}
\begin{thm}\label{thm: jd}
If $2\leq m<n$, the map
$\dis j_d^{\p}:A_d(m,n;g)\to A_{d,\infty}(m,n;g)$ is a 
homology equivalence up to dimension $D(d;m,n)$.
\end{thm}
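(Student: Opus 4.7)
The plan is to split the claim into the two assertions comprising a \lq\lq homology equivalence up to dimension $D(d;m,n)$\rq\rq\ in the sense of Remark \ref{Remark: ho}: an isomorphism on $H_k$ for $k<D(d;m,n)$, and a surjection for $k=D(d;m,n)$. The first is a formal consequence of Theorem \ref{thm: sd}, while the second requires a careful spectral-sequence comparison at the critical anti-diagonal $s-r=D(d;m,n)$.

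The isomorphism part I would deduce by iteration. Since $D(d';m,n)$ is non-decreasing in $d'$, Theorem \ref{thm: sd} shows that each stabilization $s_{d+2j}$ induces an isomorphism on $H_k$ for every $k\leq D(d;m,n)-1$, so passing to the colimit defining $A_{d,\infty}(m,n;g)$ yields the isomorphism $(j_d^{\p})_*:H_k(A_d(m,n;g))\stackrel{\cong}{\to} H_k(A_{d,\infty}(m,n;g))$ for all $k<D(d;m,n)$.

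For the surjection at $k=D:=D(d;m,n)$, I would apply the standard Comparison Theorem at the $E^{\infty}$-page to the morphism of spectral sequences $\{\tilde{\theta}^{t}_{r,s}\}$ from (\ref{spectralseqhomo}) along the anti-diagonal $s-r=D$. On the stable side, Lemma \ref{lemma: Einftyspectal} together with the dimension count
$$
D-(n-m)r\;=\;(n-m)(d+1-r)-1\;<\;0\qquad(r\geq d+1)
$$
reduces $H_D(A_{d,\infty}(m,n;g))$ to the finite sum $\bigoplus_{r=1}^{d}\E^{\infty}_{r,r+D}$. For each $r$ in this range, Corollary \ref{cor: collapse} identifies $\E^{\infty}_{r,r+D}(d)=\E^{1}_{r,r+D}(d)$, and Lemma \ref{lemma: keylemma}(iii) supplies the isomorphism $\tilde{\theta}^{\infty}_{r,r+D}$. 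The unstable summands at $r=0$ or $r\geq d+1$ map to zero stable-side terms, so the induced map on $E^{\infty}$ is surjective at every point of the anti-diagonal $s-r=D$. A standard filtration argument then promotes this to surjectivity of $(j_d^{\p})_*$ on $H_D$, completing the proof.

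The main technical subtlety to watch for is the off-by-one bookkeeping: Lemma \ref{lemma: keylemma}(iii) is stated only for $1\leq r\leq d$, not $1\leq r\leq d+1$, so the boundary contribution at $r=d+1$ must be controlled separately via the dimension count on the stable side, and the $r=0$ contribution vanishes because $\SZ_{0}=\emptyset$. This is precisely what pins down $D(d;m,n)$ as the sharp bound deliverable by this method: shifting it even one higher would leave uncontrolled contributions from $r=d+1$.
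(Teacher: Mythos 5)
Your proposal is correct and follows essentially the same route as the paper's proof: isomorphisms below $D(d;m,n)$ by iterating Theorem \ref{thm: sd} through the colimit, and surjectivity at $D(d;m,n)$ by comparing $E^{\infty}$-terms along the anti-diagonal $s-r=D(d;m,n)$, using Lemma \ref{lemma: keylemma}(iii) for $1\leq r\leq d$ and the vanishing of the stable-side terms for $r\leq 0$ and $r\geq d+1$. The bookkeeping you flag at $r=d+1$ is handled in the paper by exactly the dimension count you give.
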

\begin{proof}
It follows from Theorem \ref{thm: sd} that
the map $j_d^{\p}$ is a homology equivalence through dimension
$D(d;m,n)-1$.
So it remains to show that
$j_d^{\p}$ induces an epimorphism on the homology
$H_k(\ ,\Z)$ for $k=D(d;m,n)$.
However,
since 
$j_d^{\p}$ 
induces the homomorphism
$\{\tilde{\theta}^t_{r,s}:
\E^t_{r,s}(d)\to \E^t_{r,s}\}$ of spectral sequences,
it suffices to prove that
$\tilde{\theta}^{\infty}_{r,s}:
\E^{\infty}_{r,s}(d)\to \E^{\infty}_{r,s}$
is an epimorphism if $s-r=D(d;m,n)$.
If $r\leq 0$, $\E^{\infty}_{r,s}=0$. So in this case the assertion is
trivial. If $r\geq d+1$, 
$s-(n-m+1)r=(n-m)(d+1-r)-1<0$. 
Hence,  
$\E^{1}_{r,s}
=H_{s-(n-m+1)r}(C_r(\R^m),(\pm \Z)^{\otimes (n-m)})=0$ 
if $r\geq d+1$.
So $\E^{\infty}_{r,s}=0$ if $r\geq d+1$ and the assertion is also true in this case.
Finally,
if $1\leq r\leq d$, then, by Lemma \ref{lemma: keylemma},
$\tilde{\theta}^{\infty}_{r,s}$ is an isomorphism and
the result follows. 
\end{proof}

\begin{proof}[Proof of Theorem \ref{thm: KY4-I}]
Note that the map $i_d^{\p}$ coincides the composite of maps
$$
A_d(m,n;g) \stackrel{j_d^{\p}}{\longrightarrow}
A_{d,\infty}(m,n;g)
\stackrel{s_{d,\infty}}{\longrightarrow}\Omega^mS^n.
$$
It follows from Theorem \ref{thm: AKY1-stable} that
$s_{d,\infty}$ is a homology equivalence.
Since $j_d^{\p}$ is a homology equivalence up to dimension
$D(d;m,n)$ by Theorem \ref{thm: jd},
the map $i_d^{\p}$ is so.
If $m+2\leq n$,
$A_d(m,n;g)$ and $\Omega^mS^n$ are simply connected
by \cite[Fact 3.2]{AKY1} and  $i_d^{\p}$ is a
homotopy equivalence up to dimension
$D(d;m,n)$.
\end{proof}

\par\vspace{2mm}\par
\noindent{\bf Acknowledgements. }
Both authors should like to take this opportunity to thank  Professor Jacob Mostovoy 
for his valuable suggestions.








\end{document}